\numberwithin{equation}{section}
\def\dbE{\mathbb{E}}
\def\dbF{\mathbb{F}}
\def\dbH{\mathbb{H}}
\def\dbI{\mathbb{I}}
\def\dbL{\mathbb{L}}
\def\dbP{\mathbb{P}}
\def\dbR{\mathbb{R}}
\def\dbZ{\mathbb{Z}}
\newcommand{\cC}{\mathcal{C}}
\newcommand{\cD}{\mathcal{D}}
\newcommand{\cF}{\mathcal{F}}
\newcommand{\cI}{\mathcal{I}}
\newcommand{\cL}{\mathcal{L}}
\newcommand{\cP}{\mathcal{P}}
\newcommand{\cR}{\mathcal{R}}
\newcommand{\cU}{\mathcal{U}}
\newcommand{\cV}{\mathcal{V}}
\newcommand{\cW}{\mathcal{W}}
\def\no{\noindent}
\def\ms{\medskip}
\def\q{\quad}
\def\pa{\partial}
\def\cd{\cdot}
\def\cds{\cdots}
\def\a{\alpha}
\def\b{\beta}
\def\d{\delta}
\def\e{\varepsilon}
\def\Om{\Omega}
\def\l{\lambda}
\def\L{\Lambda}
\def\t {\tau}
\def\th {\theta}
\def\Th{\Theta}
\def\we {\wedge}
\def\ol{\overline}
\newcommand{\ba}{\begin{array}} 
\newcommand{\ea}{\end{array}}
\newcommand{\be}{\begin{equation}}
\newcommand{\ee}{\end{equation}}
\newcommand{\bea}{\begin{eqnarray}}
\newcommand{\eea}{\end{eqnarray}}
\newcommand{\beaa}{\begin{eqnarray*}}
\newcommand{\eeaa}{\end{eqnarray*}}
\DeclareMathOperator*{\esssup}{ess\,sup}
\DeclareMathOperator*{\argmax}{arg\,max}
\DeclareMathOperator*{\argmin}{arg\,min}
\newtheorem{theorem}{Theorem}[section] 
\newtheorem{assumption}[theorem]{Assumption}
\newtheorem{corollary}[theorem]{Corollary}
\newtheorem{definition}[theorem]{Definition}
\newtheorem{lemma}[theorem]{Lemma}
\newtheorem{proposition}[theorem]{Proposition}
\theoremstyle{definition}
\newtheorem{example}[theorem]{Example}
\newtheorem{remark}[theorem]{Remark}
\begin{document}

	\renewcommand {\theequation}{\arabic{section}.\arabic{equation}}
	\def\thesection{\arabic{section}}

	\numberwithin{equation}{section}
	\numberwithin{theorem}{section}
	
	\numberwithin{figure}{section}

\title{Principal-agent problem with multiple principals}

\author{Kaitong HU\footnote{CMAP, \'Ecole Polytechnique, F-91128 Palaiseau Cedex, France, {\tt kaitong.hu@polytechnique.edu}.}
        \and Zhenjie REN\footnote{CEREMADE, Universit\'e Paris Dauphine, PSL, F-75775 Paris Cedex 16, France, {\tt ren@ceremade.dauphine.fr}.}
        \and Junjian YANG\footnote{FAM, Fakult\"at f\"ur Mathematik und Geoinformation, Vienna University of Technology, A-1040 Vienna, Austria, {\tt junjian.yang@tuwien.ac.at}.}
       }

\date{\today}

\maketitle

\begin{abstract} 
 We consider a moral hazard problem with multiple principals in a continuous-time model. The agent can only work exclusively for one principal at a given time, so faces an optimal switching problem. Using a randomized formulation and techniques from the theory of backward SDEs, we manage to represent the agent's value function and his optimal effort by an It\^o process. This representation further helps to solve the principals' problem in case we have infinite number of principals in the sense of mean field game. Finally, to justify the mean field formulation, we develop the so-called backward propagation of chaos, which may carry independent interest itself.
\end{abstract}

\noindent
\textbf{MSC 2010 Subject Classification:} 91B40, 93E20 \newline
\vspace{-0.2cm}\newline
\noindent
\noindent
\textbf{Key words:} Moral hazard, contract theory, backward SDE, optimal switching, mean field games, propagation of chaos.  


\section{Introduction}

The principal-agent problem is a study of optimizing the incentives, so central in economics. In particular, the optimal contracting between the two parties, principal and agent(s), is called moral hazard, when the agent's effort is not observable
by the principal. It has been widely applied in many areas of economics and finance, for example in corporate finance, see \cite{BD05} and the recent works by Cvitani\'c, Possama\"{i} and Touzi \cite{CPT17}, El Euch, Mastrolia, Rosenbaum and Touzi \cite{EMRT2021}, Cvitani\'c and Xing \cite{CX2018}, Baldacci, Manziuk, Mastrolia and Rosembaum \cite{BMMR2019}. More recently, we also witness the works A\"{i}d, Possama\"{i} and Touzi \cite{APT22}, Alasseur, Ekeland, \'Elie, Hern\'andez and Possama\"{i} \cite{AEEHP20}, \'Elie, Hubert, Mastrolia and Possama\"{i} \cite{EHMP2021}, Alasseur, Farhat and Saguan \cite{AFS2020}, using the principal-agent formulation to study how to design an optimal electricity contract and how to encourage people to embrace the energy transition. 
We would also like to mention the very recent applications related to epidemic control by Aurell, Carmona, Dayanikli and Lauri\`ere \cite{ACDL2020}, Hubert, Mastrolia, Possama\"{i} and Warin \cite{HMPW2020}. 

While the research on the discrete-time model can be dated back further, the first paper on continuous-time principal-agent problem is the seminal work by Holmstr\"om and Milgrom \cite{HM87}, who study a simple continuous-time model in which the agent gets paid at the end of a finite time interval. They show that optimal contracts are linear in aggregate output when the agent has exponential utility with a monetary cost of effort. The advantage of the continuous-time model is further explored by Sannikov \cite{Sannikov08}. Not only he considers a new model which allows the agent to retire, but also (and  more importantly in the mathematical perspective) he introduces new dynamic insights to the principal-agent problem, and it leads to simple computational procedure to find the optimal contract by solving an ordinary differential equation in his case.  

Later, the idea of Sannikov is interpreted and introduced to the mathematical finance community by Cvitani\'c, Possama\"i and Touzi \cite{CPT18}. Let us illustrate their contribution with a toy model. 
Denote by $\xi$ the contract paid at the end of a finite time interval $[0,T]$. Assume the agent faces the following optimization:
 \beaa
  \max_\a \dbE\left[\xi(X^\a) - \int_0^T c(\a_t) dt \right],\q\mbox{where}\q dX^\a_t = dW_t + \a_t dt .
 \eeaa
The crucial observation in \cite{CPT18} is that both the contract $\xi$ and the Agent's best response $\a^*[\xi]$ can be characterized by the following backward stochastic differential equation (in short, BSDE, for readers not familiar with BSDE we refer to \cite{PP90, EKPQ97}, and in particular to \cite{CZ12} for the applications on the contract theory):
  \bea\label{eq:introbsde}
    dY_t = -c^*(Z_t) dt +Z_t dW_t,\q Y_T =\xi, \q\mbox{where}\q c^*(z) = \max_a \big\{az - c(a) \big\},
  \eea
namely, $\xi = Y_T $ and $\a^*_t[Z] = \argmax_a \big\{aZ_t - c(a) \big\}$ for all $t\in [0,T]$. This induces a natural (forward) representation of the couple $(\xi, \a^*[\xi])$:
  \begin{align*}
    \begin{cases}
      \xi = Y_T^{Y_0, Z} := \displaystyle Y_ 0 -\int_0^T c^*(Z_t) dt + \int_0^T Z_t dW_t  \\ 
      \a^*_t[\xi] := \a^*_t[Z] = \displaystyle\argmax_a \big\{aZ_t - c(a)\big\}, \,\, \mbox{ for all $t\in [0,T]$}
    \end{cases}
     \mbox{ for some } (Y_0,Z),
  \end{align*}
and this neat representation transforms the once puzzling principal's problem into a classical control problem, namely,
$$
\max_\xi \dbE\Big[U\left(X^{\a^*[\xi]}_T - \xi\right)\Big] = \max_{Y_0,Z} \dbE\Big[U\left(X^{\a^*[Z]}_T - Y^{Y_0, Z}_T\right)\Big].
$$
Cvitani\'c, Possama\"i and Touzi \cite{CPT18} provide also an extension of Sannikov \cite{Sannikov08} allowing to address a wide spectrum of principal-agent problems, in particular problems with volatility control, which required the use of second-order BSDEs. We also mention that the work of Sannikov \cite{Sannikov08} has also been recently revisited by Possama\"{i} and Touzi \cite{PT2020}.
This idea of representation is further applied to study the case where the principal can hire multiple agents. See \'Elie and Possama\"i \cite{EP19}, Koo, Shim and Wang \cite{KSS2008}, Baldacci, Possama\"i and Rosenbaum \cite{BPR2021}. Eventually, in \'Elie, Mastrolia and Possama\"i \cite{EMP18} the authors follow the same machinery to study the model where the principal hires infinite number of agents using the formulation of mean field games (as for the mean field game we refer to the seminal paper \cite{LL07} and the recent books \cite{CD1} and \cite{CD2}). See also the works by Carmona and Wang \cite{CW20201}, \'Elie, Hubert, Mastrolia and Possama\"{i} \cite{EHMP2021} for a continuum of agents with mean-field interactions.

There are fewer existing literature on the model concerning one agent facing multiple principals. Meanwhile, along  the emerging  services, such as private teachings, household services, Uber-like taxis, and so on, more and more people are involved in careers during which they can switch from one employer (typically an online platform) to another more freely than ever before. This new trend evokes our curiosity to have a theoretical insight on the incentive-mechanism of such job markets.  

In the 1980s and 1990s, the economists investigated the common agency problem in the discrete time case, i.e., the agent simultaneously works on different projects for different principals. See \cite{Baron1985, BS1982, BW1985, BW1986,DGH1997}. 
Recently, in \cite{MR18} the authors the common agency problem in the continuous time case.
However, to the best of our knowledge, no one has yet considered a $n$-principal/$1$-agent model where the agent can only exclusively work for one principal at a given time.
In such a model, the agent is facing an optimal switching (among the principals) problem, i.e., the agent is looking for optimal stopping times to switch and optimal regimes. According to the classic literature of optimal switching, see e.g.~\cite{PVZ09, HZ10, HT10, CEK11}, the counterpart of the BSDE characterization \eqref{eq:introbsde} for the agent's problem would be a system of reflected BSDE in the form:
  $$
    dY_t = -f_t(Y_t, Z_t)dt +Z_tdW_t - dK_t, \q Y_T= \xi,
  $$
where $K$ is an increasing process satisfying some (backward) Skorokhod condition. The presence of the process $K$ and its constraint make it difficult to find a representation of the couple $(\xi, \a^*[\xi])$ as in \cite{CPT18}. In this paper, we propose an alternative facing this difficulty. Instead of letting the agent choose stopping times to change his employers, we study a randomized optimal switching problem where the switching time is modelled by a random time characterized by a Poisson point process and the agent influences the random time by controlling the intensity of the Poisson point process. Here, the intensity describes the hesitation of changing employer. It is fair to note that similar randomized formulations of switching have been discussed in the literature, see e.g.~\cite{Bruno09, EK10}. In such framework, we may characterize $(\xi, \a^*[\xi])$ by the solution to a system of BSDEs (without reflection). 

Unfortunately, this characterization of the agent's problem and the corresponding representation do not help us to solve simply the principals' problem. We observe that the optimizations the principals face are time-inconsistent, that is, these cannot be solved by dynamic programming. In order to get around this difficulty, we suggest the following two approaches:
\begin{itemize}
\item We propose and study a ``suboptimal'' optimization for the principals which turns out to be time-consistent. In this case, we find the PDE system characterizing the Nash equilibrium and solve it numerically.

\item We also note that this time-inconsistency disappears once the number of principals tends to infinity. In the setting of infinite number of principals, it is natural to adopt the formulation of mean field game. We prove the existence of mean field equilibrium largely based on the recipes in Lacker \cite{Lacker15}. Further, in order to justify our mean field game formulation, we introduce a machinery named ``backward propagation of chaos'' which may carry independent interest itself.
\end{itemize}

The rest of the paper is organized as follows. In Section \ref{sec:problem}, we state our moral hazard problem with $n$-principal and one agent, we shall solve the agent's problem under the previously mentioned randomized optimal switching formulation, and observe the time-inconsistency of the $n$-principal problem. In Section \ref{sec:secondbest} we study the time-consistent ``suboptimal'' optimization for the principals.  Finally in Section \ref{sec:mfg}, we shall derive the mean field game among the principals, prove its existence, and justify it using the ``backward propagation of chaos'' technique.

\section{Moral hazard problem : $n$ principals and one agent}\label{sec:problem}

 In this paper we consider the principal-agent problem on the a finite time horizon $[0,T]$ for some $T>0$. 
 The main novelty is to introduce the new model and method to allow the agent to choose working for different employers. In this section, we set up the model in which one agent switches among $n$ different principals. 
 
 For the agent, the set of possible regimes is $\mathbb{I}_n:=\{1,2,\cdots,n\}$.
 Denote by $C([0,T];\dbR^n)$ the set of continuous functions from $[0,T]$ to $\dbR^n$, endowed with the supremum norm $\|\cdot\|_T$, where 
   $$ \|x\|_t:=\sup_{0\leq s\leq t}|x_s|, \quad t\in[0,T],\quad x\in C([0,T];\dbR^n). $$
 Denote by $D([0,T],\mathbb{I}_n)$  the set of c\`adl\`ag functions from $[0,T]$ to $\mathbb{I}_n$. 
 We introduce the canonical space $\Omega:=C([0,T];\dbR^n)\times D([0,T];\mathbb{I}_n)$, denote the canonical process by $(W,I)$, and the canonical filtration by $\dbF = \{\cF_t\}_{t\in [0,T]}$. 
 The process $X_t$ represents the outputs of the principals and $I_t$ records for which principal the agent is working at time $t$. Denote by $\dbF^W$ the filtration generated by the process $W$ alone. We also define $\dbP_0$ as the Wiener measure on $C([0,T],\dbR^n)$.
 
 In our model, for simplicity, the output process follows the dynamic 
   \begin{equation} \label{eq:OutputSDE}
     X_t =  \int_0^t e_{I_s} ds + W_t, \q\mbox{where}\q  e_i = (0,\cdots,0, 1,0,\cdots, 0)^\top \in\dbR^n,
   \end{equation}
  where $W$ is a $n$-dimensional Brownian motion.
 That is, the principal employing the agent has a constant-$1$ drift while outputs of the others just follow the Brownian motion. 
 The agent controls the process $I$, that is, he switches from one employer to another so as to maximize his utility: 
   \bea \label{eq:agenttodefine}
     V_0^A(\xi,w) = \sup_{I}\dbE\left[\sum_{i=1}^n\left(\xi^i\mathbf{1}_{\{I_T=i\}} + \int_0^T \theta^i_t\mathbf{1}_{\{I_t=i\}}du\right) - C_T(I)\right],
   \eea
 where $\xi^i$ is the reward at the terminal time $T$, $\theta^i$ is the wage payed by principal $i$, and $C_T(I)$ denotes the cost of switching $I$ up to time $T$ which will be defined later when we analyze the agent's problem in detail. 
 Naturally, the payment $\xi^i$ should be an $\cF^W_T$-measurable random variable and $\theta^i$ should be an $\dbF^W$-adapted process.

Providing the contracts $\{(\xi^i, \theta^i)\}_{i\in \dbI_n}$ respectively, the n principals search among them a Nash equilibrium so as to maximize their profits:
  \bea\label{eq:principalfirst}
    V^{P,i}_0 = \max_{(\xi^i, \theta^i)} \dbE\left[X^{i,\xi^i}_T - U^i(\xi^i) \mathbf{1}_{\{I_T=i\} } -\int_0^T U^i(\theta^i_t) \mathbf{1}_{\{I_t=i\} }dt\right],
  \eea
  where $X^{i,\xi^i}$ denotes the optimal output process controlled by the agent. 
Here, we use $U^i$ to describe the principals' preference. 
We assume that $U^i$ is convex and increasing. 
We also note that $x\mapsto -U^i(-x)$ is concave and increasing, i.e., the usual definition of a utility function. See Section \ref{sec:numerical} for concretes examples.

\begin{remark}
	In the agent problem \eqref{eq:principalfirst}, the function $U^i$ applies only to the final payment $\xi^i$ and the wage $\theta^i$. 
	We may also have another function $U^i_0$ applying to $X^{i,\xi^i}_T$ as well, i.e., 
	 \begin{align} \label{eq:principalU0}
	 	V^{P,i}_0 = \max_{(\xi^i, \theta^i)} \dbE\left[-U^i_0\big(-X^{i,\xi^i}_T\big) - U^i(\xi^i) \mathbf{1}_{\{I_T=i\} } -\int_0^T U^i(\theta^i_t) \mathbf{1}_{\{I_t=i\} }dt\right].  
	 \end{align}
	For the sake of simplicity, we assume that $U^i_0(x) = x$, for which we have \eqref{eq:principalfirst}. 
	See Remark \eqref{rem:U0=id} for more details. 
\end{remark}

 As in \cite{Sannikov08, CPT18, LRTY2022}, we are going to provide a representation of the value function of the agent's problem so as to solve the principals' problem by dynamic programming.

\subsection{Agent's problem} 
 
In our model, instead of allowing the agent to control the process $I$ in a ``singular'' way, we assume that $I$ is driven by a Poisson random measure (p.r.m.) and allow the agent to control its intensity. 
More precisely, in the weak formulation of stochastic control, the agent aims at choosing his optimal control among:
 \beaa
   \Bigg\{\dbP^\a\in \cP(\Om): && \dbP^\a\mbox{-a.s.}~\mbox{$W$ is a Brownian motion,}~~dI_t = \int_{\dbI_n} (k - I_{t-})\mu(dt, dk),\\
    && \mbox{where $\mu$ is a p.r.m.~with intensity}~\a(dt, \cd) =\sum_{k\in \dbI_n} \a^k_t \d_k dt, \\
    && \mbox{for some nonnegative $\dbF$-adapted $(\a^k)_{k\in \dbI_n}$}, \,\, \mbox{$W$ and $\mu$ are independent} \Bigg\},
 \eeaa
 where $\cP(\Om)$ is the set of all probability measures on $\Om$ and $\delta_k$ is the Dirac measure.

\begin{remark}\label{rem:jumpdensity}
 Define the first jump time of the process $I$:
  \bea  \label{eq:firstjump}
    \tau_t:= \inf\{s\geq t: I_s\neq I_t\}.
  \eea
It follows from the Girsanov theorem for multivariate point processes (see e.g.~\cite{Jacod75}) that for any $\dbF^W$-adapted processes $\{\gamma(i)\}_{i\in \dbI_n}$ we have
  \beaa
    && \dbE_t^{\dbP^\a}\left[\gamma_{\t_t}(I_{\t_t}) \mathbf{1}_{\{\t_t\le s, ~I_{\t_t} =j\}}\right] = \dbE_t^{\dbP^\a}\left[\int_t^s \b^{I_t,\a}(t,u) \a_u^j \gamma_u(j) du\right],\q \mbox{for all $j\in \dbI_n$ and $j\neq I_t$},\\
    && \mbox{as well as}\q \dbP^\alpha\left[\tau_t > s \big| \cF_s \right] = \b^{I_t, \a}(t,s ),\q\mbox{where}~~\b^{i,\a}(t,s ):= \exp\bigg(-\int_t^s \sum_{j\neq i}\alpha^j_u du\bigg).
 \eeaa
These results will be useful in the upcoming calculus. 
 \end{remark}
With the intensity $\a$ introduced as above, we can now make it precise how to define the cost of switching in \eqref{eq:agenttodefine}.  Given $n$ contracts $\{(\xi^i,\theta^i)\}_{i\in \dbI_n}$, the agent solves the following optimal switching problem 
  \bea\label{eq:agentVF}
    V_0^A = \sup_{\alpha}\dbE^{\dbP^\alpha}\Bigg[\sum_{i=1}^n\xi^i\mathbf{1}_{\{I_T=i\}}
									+\int_0^T \bigg(\sum_{i=1}^n\theta^i_u\mathbf{1}_{\{I_u=i\}} 
									-\frac{1}{n-1} \sum_{i\neq I_u} c\big((n-1)\alpha^i_u\big)\bigg)du\Bigg].
  \eea
Here, $c$ is a cost function, i.e., a non-negative convex function.
Intuitively, the intensity process $\alpha$ describes the hesitation of changing employer. 
The bigger $\alpha$ is, the less hesitation the agent has to change his employer. 
 The agent, when working for the principal $i$, can choose some effort $\alpha^j$ in order to increase the probability of switching from principal $i$ to principal $j$.
The normalization $\frac{1}{n-1}c\big((n-1)\cd\big)$ is for later use, as $n\rightarrow \infty$.

\begin{remark}
 Unlike the moral hazard problems proposed by Holmstr\"om and Milgrom \cite{HM87}, Sannikov \cite{Sannikov08} and Cvitani\'c, Possama\"i and Touzi \cite{CPT18}, in our setting since the drift of the output diffusion $X$ \eqref{eq:OutputSDE} is constant, that is, the principals cannot incite the agent to make more ``effort'' during his work. Instead, the principals make contracts based on the information of $W=(W^1, \cds, W^n)$ to give the agent the incentive to switch among them (see Remark \ref{rem:incentive}). Note that the principals still do not have insight to the intensity that the agent chooses to switch among the different job opportunities. Therefore, the principals encounter a new type of moral hazard problem, and need to provide incentives so as to obtain the most advantageous feedback from the agent. 
\end{remark} 

As in the classical literature of the optimal switching problems, we shall use the dynamic programming principle to obtain the system of equations characterizing the value function. First, define the dynamic version of \eqref{eq:agentVF}:
 \begin{equation*}
    V_t^A = \esssup_{\alpha}\dbE_t^{\dbP^\alpha}\Bigg[\sum_{i=1}^n\xi^i\mathbf{1}_{\{I_T=i\}} 
                                                             + \int_t^T \bigg(\sum_{i=1}^n \theta^i_u\mathbf{1}_{\{I_u=i\}}-\frac{1}{n-1} \sum_{i\neq I_u} c\big((n-1)\alpha^i_u\big)\bigg)du\Bigg].
 \end{equation*}	
Recall $\t_t$ defined in \eqref{eq:firstjump}. By the dynamic programming, see, e.g., \cite[Theorem 5.2]{FM20}, we have
 \begin{multline*}
    V_t^A = \esssup_{\alpha}\dbE_t^{\dbP^\alpha}\Bigg[\sum_{i=1}^n\xi^i\mathbf{1}_{\{I_T=i,~\t_t >T\}} + V^A_{\t_t}\mathbf{1}_{\{\t_t\le T\}}  \\
    									+\int_t^{\t_t\we T} \bigg(\sum_{i=1}^n \theta^i_u\mathbf{1}_{\{I_u=i\}} 
									- \frac{1}{n-1} \sum_{i\neq I_u} c\big((n-1)\alpha^i_u\big)\bigg)du \Bigg].
 \end{multline*}
Further, by defining $V^{A, i}_t := V^A_t |_{I_t =i}$, the value of the control problem at time $t$ given $I_t = i$, we obtain
 \begin{align}
   V_t^{A,i} &= \esssup_{\alpha}\dbE_t^{\dbP^\alpha}\Bigg[\xi^i\mathbf{1}_{\{\tau_t>T\}} 
							+ \sum_{j\neq i} \bigg(V^{A,j}_{\tau_t}\mathbf{1}_{\{\t_t\le T, ~ I_{\t_t} =j\}}
							+ \int_t^{\tau_t\we T} \frac{\theta^i_u - c\big((n-1)\a^j_u\big)}{n-1}du \bigg)\Bigg] \notag\\
             &= \esssup_{\alpha}\dbE_t^{\dbP^\alpha}\Bigg[\xi^i\beta^{i,\alpha}(t,T) + \int_t^T\beta^{i,\alpha}(t,u)\sum_{j\neq i}\bigg(\alpha_u^{j}V^{A,j}_u + \frac{\theta^i_u-c\big((n-1)\a^j_u\big)}{n-1}\bigg)du \Bigg]. \label{eq:agentclassic}
 \end{align}

The second equality above is due to the results in Remark \ref{rem:jumpdensity}. In view of \eqref{eq:agentclassic}, it becomes a classical stochastic control problem. In particular, the value function of this control problem can be characterized by the BSDE. 

\begin{assumption} \label{assum:c}
Assume that the cost function $c$ is convex, and $c$ takes value of $+\infty$ out of a compact set $K$. We also assume that there exists a unique maximizer 
  \begin{align*}
    a^*(y) := \argmax_{a\ge 0}\big\{a y - c(a) \big\} \in K,\q\mbox{for all }\,\, y\in \dbR.
  \end{align*}
Define the convex conjugate of $c$
  \begin{align*}
  	 c^*(y) := \sup_{a\ge 0} \big\{a y - c(a) \big\},\q\mbox{for all }\,\, y\in \dbR.
  \end{align*}
Further assume that $c^*$ is Lipschitz continuous. 
\end{assumption}

\begin{example}\label{eg:costhesistation}
As an example, consider $c(a):=\frac{1}{2}a^2$ on $K:=[0,\overline{K}]$ for some $\overline{K}>0$. 
Then, 
 \begin{align}\label{eq:acstar}
   a^*(y)=\begin{cases}
            0, & y\leq 0, \\
            y, & 0\leq y\leq \overline{K}, \\
            \overline{K}, & y\geq \overline{K},
          \end{cases}
    \quad \mbox{and} \quad
   c^*(y)=\begin{cases}
           0, & y\leq 0, \\
           \frac{1}{2}y^2, & 0\leq y\leq \overline{K}, \\
           \overline{K}y-\frac{1}{2}\overline{K}^2, & y\geq \overline{K}.
          \end{cases}
 \end{align}
 Obviously, the function $c^*$ is Lipschitz continuous with constant $\overline{K}$.
 \end{example}

\begin{proposition} \label{prop:nagent}
 Under Assumption \ref{assum:c}, given $\xi^i \in\dbL^2(\dbP_0), \theta^i\in \dbH^2(\dbP_0)$\footnote{We denote by $\dbH^2(\dbP_0)$ the space of adapted processes $w$ such that 
             $$ \|\theta\|^2_{\dbH^2(\dbP_0)}:= \dbE^{\dbP_0}\left[\left(\int_0^T|\theta_u|^2du\right)^{\frac{1}{2}}\right] <\infty. $$
             } for all $i\in \dbI_n$,  the following system of BSDEs has a unique solution $(Y^i, Z^i)_{i\in\dbI_n}:$
   \bea\label{eq:BSDEnagent}
     Y_t^i = \xi^i + \int_t^T \bigg(\frac{1}{n-1}\sum_{j\neq i}c^*(Y^j_u - Y^i_u) + \theta^i_u \bigg)du - \int_t^T Z^i_u\cdot  d W_u,~~ i\in \dbI_n,~~\mbox{$\dbP_0$-a.s.}
   \eea
 Moreover, we have $Y^i_t = V^{A, i}_t$, $\dbP_0$-a.s., i.e., the solution $Y^i_t$ of the BSDE represents the continuation utility of the agent when working for the principal $i$ at time $t$. In particular, the optimal intensity satisfies
   \bea\label{eq:optimalintensity}
     \a^{j,*}_t = \frac{1}{n-1}a^*\big(Y^j_t-Y^{I_t}_t\big), \q j\neq I_t, ~~\mbox{for all}~~t\in [0,T], \q\dbP_0\mbox{-a.s.}
   \eea
\end{proposition}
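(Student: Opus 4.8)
I would split the statement into three stages: well-posedness of the system \eqref{eq:BSDEnagent}, the identification $Y^i=V^{A,i}$, and the extraction of the optimal intensity \eqref{eq:optimalintensity}. The first stage is routine; the second and third I would handle simultaneously by a verification argument built on a supermartingale that degenerates to a martingale exactly at the candidate optimizer.

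\textbf{Well-posedness.} First I would observe that \eqref{eq:BSDEnagent} is a standard Lipschitz system of BSDEs driven by the $\dbP_0$-Brownian motion $X$. Writing the $i$-th driver as $f^i(u,y,z^i)=\frac{1}{n-1}\sum_{j\neq i}c^*(y^j-y^i)+w^i_u+e_iz^i$, the map $y\mapsto c^*(y^j-y^i)$ is Lipschitz since $c^*$ is Lipschitz by Assumption \ref{assum:c}, and the $z^i$-dependence is linear through the $i$-th coordinate $e_iz^i$. Hence $f=(f^i)_{i\in\dbI_n}$ is uniformly Lipschitz in $(y,z)$; combined with $\xi^i\in\dbL^2(\dbP_0)$ and $w^i\in\dbH^2(\dbP_0)$, the classical multidimensional BSDE theory gives a unique solution $(Y^i,Z^i)_{i\in\dbI_n}$ in $\dbS^2\times\dbH^2$.

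\textbf{Verification.} The key is to read the term $e_iZ^i$ as a Girsanov compensation. Fix an admissible intensity $\alpha$ and work under $\dbP^\alpha$. When $I_u=i$ the output has drift $e_i$, so $Z^i_u\cdot dX_u=e_iZ^i_u\,du+Z^i_u\cdot dW^\alpha_u$ for a $\dbP^\alpha$-Brownian motion $W^\alpha$; inserting this into \eqref{eq:BSDEnagent} cancels the $e_iZ^i$ term and shows that, along $\{I_u=i\}$, the $\dbP^\alpha$-drift of $Y^i$ equals $-\big(\frac{1}{n-1}\sum_{j\neq i}c^*(Y^j-Y^i)+w^i\big)$. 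I would then apply It\^o's formula to the ``value-following-the-agent'' process $Y^{I_t}_t$: its continuous part carries the drift just computed, while each switch of $I$ from $i$ to $j$ produces a jump $Y^j-Y^i$ whose $\dbP^\alpha$-compensator is $\sum_{j\neq i}\alpha^j_t(Y^j_t-Y^i_t)$ by Remark \ref{rem:jumpdensity}. Adding the running gain, the drift of
\[
 S^\alpha_t:=Y^{I_t}_t+\int_0^t\Big(\sum_i w^i_u\mathbf{1}_{\{I_u=i\}}-\frac{1}{n-1}\sum_{i\neq I_u}c\big((n-1)\alpha^i_u\big)\Big)du
\]
collapses, regime by regime, to $\sum_{j\neq i}\big(\alpha^j(Y^j-Y^i)-\frac{1}{n-1}c((n-1)\alpha^j)-\frac{1}{n-1}c^*(Y^j-Y^i)\big)$, the wage terms cancelling. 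By the Fenchel--Young inequality $ay-c(a)\le c^*(y)$ with $a=(n-1)\alpha^j$ and $y=Y^j-Y^i$, each summand is $\le 0$, with equality precisely when $(n-1)\alpha^j=a^*(Y^j-Y^i)$, i.e. when $\alpha=\alpha^*$ given by \eqref{eq:optimalintensity}. Thus $S^\alpha$ is a $\dbP^\alpha$-supermartingale for every admissible $\alpha$ and a $\dbP^{\alpha^*}$-martingale. Taking expectations and using $Y^{I_T}_T=\xi^{I_T}$ yields $\dbE^{\dbP^\alpha}[\,\cdots]\le Y^{I_0}_0$ with equality at $\alpha^*$, which gives $V^{A,i}_0=Y^i_0$ (and the analogue at any $t$ for the dynamic value) together with the optimality of \eqref{eq:optimalintensity}.

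\textbf{Main obstacle.} I expect the genuine difficulty to lie not in the algebra above but in the stochastic-calculus bookkeeping: making the It\^o formula for the jump process $Y^{I_t}_t$ rigorous, justifying the point-process Girsanov change of measure, and upgrading the local martingale parts $\int Z^i\cdot dW^\alpha$ and the compensated jump integrals to true martingales so that the expectation step is legitimate. The compactness built into Assumption \ref{assum:c} is what I would lean on here: since $c=+\infty$ off the compact set $K$, the maximizer $a^*$ is bounded, so $\alpha^*$ is a bounded admissible intensity, and the $\dbS^2\times\dbH^2$ estimates on $(Y^i,Z^i)$ then deliver the required integrability and admissibility.
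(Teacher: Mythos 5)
Your proposal is correct, and both the well-posedness step and the final conclusion match the paper's; but the verification is organized along a genuinely different route. The paper never touches the jump process directly in the proof: it first reduces the randomized switching problem to the ``classical'' intensity-control problem \eqref{eq:agentclassic} by integrating out the jumps with the survival probability $\b^{i,\a}(t,s)$ and the jump-time density from Remark \ref{rem:jumpdensity}, observes that the Hamiltonian $\sup_{a^j\ge 0}\{(n-1)a^j(Y^j-Y^i)-c((n-1)a^j)\}$ is by definition $c^*(Y^j-Y^i)$, and then invokes the classical Lipschitz-BSDE well-posedness and verification (citing \cite{PP90}) for that reduced problem. You instead keep the jumps and run a martingale optimality principle on the process $Y^{I_t}_t$ plus accumulated gains: the It\^o decomposition with jump compensator $\sum_{j\neq i}\a^j_t(Y^j_t-Y^i_t)$, the cancellation of $e_iZ^i$ against the drift of $X$ on $\{I_u=i\}$, and the Fenchel--Young inequality collapsing the drift to a nonpositive quantity that vanishes exactly at \eqref{eq:optimalintensity} are all correct. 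What your route buys is a self-contained proof in which the optimality of $\a^{j,*}=\frac{1}{n-1}a^*(Y^j-Y^{I_t})$ falls out of the equality case of Fenchel--Young, rather than being read off from the reduced problem; what the paper's route buys is brevity and the ability to quote standard results wholesale, at the cost of relying on the unproved formulas of Remark \ref{rem:jumpdensity}. Your closing remarks on the technical obstacles are also well judged: the boundedness of $a^*$ (from the compactness of $K$ in Assumption \ref{assum:c}) is indeed what makes $\a^*$ admissible and the local martingales true martingales, and intensities with $(n-1)\a^j\notin K$ are automatically suboptimal since they incur infinite cost, which disposes of the unbounded competitors in the supermartingale step.
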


\begin{proof}
In view of the control problem \eqref{eq:agentclassic}, following the result of \cite{HL95, EKPQ97}, see also \cite{Zhang2017BSDEBook}, the corresponding BSDE reads
 \begin{align*}
   Y_t^i = \xi^i + \int_t^T\bigg(\frac{1}{n-1}\sum_{j\neq i}\sup_{a^j\geq 0}\Big\{(n-1)a^j\big(Y_u^j-Y_u^i\big) - c\big((n-1)a^j\big) \Big\}+\theta^i_u \bigg)du - \int_t^T Z^i_u\cdot d W_u.
 \end{align*}
 Then, \eqref{eq:BSDEnagent} follows from the definition of $c^*$. Since all the coefficients are Lipschitz continuous, the wellposedness of the BSDE system and the verification for the control problem is classical, see e.g.~\cite{PP90}. 
\end{proof}

\begin{remark}  \label{rem:incentive}
    We recall that $Y^i$ is the continuation value of the agent when working for the principal $i$ at time $t$. 
    The switching of the agent, determined by the optimal intensity $\frac{1}{n-1}a^*(Y^j_t-Y^{I_t}_t)$,  is  influenced by principals through their contracts. 
\end{remark}

\begin{remark}
It is noteworthy that the agent problem can be easily solved and enjoys a BSDE representation, thanks to the assumption that the contracts $(\xi^i)_i$ are $\cF^W_T$-measurable. It is crucial that the contracts do not depend on the process $I$, in other words, the principals have no right to design contracts based on the history of employment of the agent. It may contradicts the reality in some applications, but is important for our mathematical reasoning. 
\end{remark}

\begin{remark}
	As in \cite{LRTY2022}, the results could also be extended to a more general case, e.g., with discount factors or risk-aversion in the agent's utility function.
	With the presence of discount factors, we have another $-rY^i$ term in \eqref{eq:BSDEnagent}.  
	Given an invertible utility function $U_A$, substitute $\xi^i$ by $U_A^{-1}(\xi)$ in \eqref{eq:BSDEnagent}. 
	We need to make the integrability assumptions on $U_A(\xi^i)$ instead of $\xi^i$. 
\end{remark}

Before we continue, let us summarize our model:
\begin{itemize}
	\item The $i$-th principal, $i\in\{1,\cdots,n\}$, observes only $W$ and $X^i$, and in particular, she does not observe for which principal the agent is working, if he is not working for her. 
	\item During the contracting period, the agent can only work for one principal at a time, but can make an effort to increase the intensity of switching to another principal. 
	\item At any time $t\in[0,T]$, the agent receives a continuous payment from the principal he works for, i.e., $\theta^i_t$ if he works for the $i$-th principal. 
	\item At the time $T$, the agent receives a terminal payment from the principal he works for at that time, i.e., $\xi^i$ if he ends up working for the $i$-th principal. 
	\item Each principal $i$ designs a contract, i.e., a couple $(\xi^i,\theta^i)$, in order to compensate the agent for his work but most importantly to incentivize the agent to continue working for her, or to come to work for her if he is actually working for another principal $j\neq i$. 
\end{itemize} 
 
\subsection{Principals' problem: time inconsistency}

In the previous section, we managed to represent the value function of the agent by an It\^o process (Proposition \ref{prop:nagent}). 
As in \cite{Sannikov08, CPT18}, we expect that this representation would help us to solve the principals' problem by the dynamic programming approach. 
However, in this model, this approach does not work. We shall explain in the case $n=2$ for the simplification of notation.
 
Consider the set of all contracts 
   \begin{align*}
   	  \Xi: = \Big\{ \big\{(\xi^i, \theta^i)\big\}_{i=1,2}:~ \xi^i\in \dbL^2(\dbP_0),~ \theta^i\in \dbH^2(\dbP_0)~\mbox{and}~ V^A(\xi,\theta)\ge R\Big\},
   \end{align*}
  where $R$ is the reservation value of the agent for whom only the contracts such that $V^A(\xi,\theta)\ge R$ are acceptable. Now define 
  \begin{align*}
  	 \cV := \left\{\big\{(Y^i_0, Z^i)\big\}_{i=1,2}:~ Y_0^i\ge R, ~Z^i\in \dbH^2(\dbP_0),~i=1,2\right\}.
  \end{align*}
It follows from Proposition \ref{prop:nagent} that
  \begin{align}\label{eq:contractset}
    \Xi = \Big\{\big\{(\xi^i, \theta^i)\big\}_{i=1,2}: &~~ \theta^i\in \dbH^2(\dbP_0) \mbox{ and }  \xi^i=Y^{i,Y_0^{i},Z^i,\theta^i}_T, \mbox{ where } Y^{i,Y_0^{i},Z^i,\theta^i} \mbox{ satisfies} \notag\\
	  &~~ Y^i_T =  Y^i_0 - \int_0^T \left(c^*\big(Y^j_t-Y^i_t\big) + \theta^i_t \right)dt + \int_0^T Z^i_t\cdot dW_t, ~\dbP_0\mbox{-a.s.},\notag \\
	  &~~ \mbox{with }~j\neq i ~\mbox{ and }~ \big\{(Y^i_0, Z^i)\big\}_{i=1,2}\in \cV \Big\}.
  \end{align} 
For simplicity of notation, if there is no ambiguity, we write simply $Y^i$ instead of $Y^{i,Y_0^{i},Z^i,\theta^i}$. But just keep in mind that the process $Y^i$ is controlled by $Z^i$ and $\theta^i$ with initial value $Y^i_0$. 
The corresponding optimal intensity reads $\alpha^*_t = \big(a^*(Y_t^1-Y_t^2)\mathbf{1}_{\{I_t = 2\}}, a^*(Y_t^2-Y_t^1)\mathbf{1}_{\{I_t = 1\}}\big)$.
Therefore, the principals are searching for a Nash equilibrium so as to maximize:
  \begin{align*}
    V^{P,i}_0 &= \sup_{(\xi^i, \theta^i)} \dbE^{\dbP^{\alpha^*}}\left[ X^i_T - U^i(\xi^i) \mathbf{1}_{\{I_T=i\}}  -\int_0^T U^i(\theta^i_t) \mathbf{1}_{\{I_t=i\} }dt \right] \\
              &= \sup_{(Y_0^i, Z^i, \theta^i)} \dbE^{\dbP^{\alpha^*}}\left[- U^i(Y^{i}_T) \mathbf{1}_{\{I_T=i\}} -\int_0^T \big( U^i(\theta^i_t)-1\big) \mathbf{1}_{\{I_t=i\} }dt\right] = \sup_{Y^i_0\geq R}J^i_0(Y^i_0),  
  \end{align*}
  where 
   $$ J^i_0(Y_0^i):= \sup_{(Z^i,\theta^i)}\dbE^{\dbP^{\alpha^*}}\left[  - U^i\big(Y^{i}_T\big)\mathbf{1}_{\{I_T=i\}} - \int_0^T \big(U^i(\theta^i_t) -1\big) \mathbf{1}_{\{I_t=i\}}dt\right]. $$
 
\begin{remark}\label{rem:control of principal}
Regardless the running payment $\theta^i$, the principal $i$ can only modify the contract by choosing $Y_0^i$ and $Z^i$. Note that $Y_0^i$ represents the ``expectation'' of the contract, while $Z^i$ describes its variance (in other word, the risk). 
\end{remark}
   
\begin{remark} \label{rem:U0=id} 
   If we consider the principal problem \eqref{eq:principalU0} with $U_0^i$, we obtain by It\^o's formula that
   \begin{align*}
   	 V^{P,i}_0 = \sup_{(Y_0^i, Z^i, \theta^i)}\dbE^{\dbP^{\alpha^*}}\bigg[ &- U^i(Y^{i}_T) \mathbf{1}_{\{I_T=i\}} \\ 
   	     &-\int_0^T \Big(\big( U^i(\theta^i_t) - (U_0^i)'(-X_t^i)\big) \mathbf{1}_{\{I_t=i\}} + \frac{1}{2}(U^i_0)''(-X_t^i)\Big)dt\bigg].
   \end{align*}
   Therefore, we have the additional $X$-dependency in the formula. For the sake of simplicity, we only consider the case $U_0(x) = x$.  
\end{remark}

Up to now, we are applying the same strategy as in \cite{CPT18}. Assume the control problem is time-consistent, that is, admits dynamic programming. 
Fix $Y_0$ and define the dynamic version of $J^i_0$:
   $$ J^i_t(Y^{i}_t):= \esssup_{(Z^i,\theta^i)}\dbE_t^{\dbP^{\alpha^*}}\left[ - U^i(Y^{i}_T)\mathbf{1}_{\{I_T=i\}} - \int_t^T \big(U^i(\theta^i_u) -1\big)\mathbf{1}_{\{I_u=i\}}du\right]. $$
Defining $J^{i,1}_t:= J^{i}_t(Y^{i}_t)|_{I_t =1} $ and $J^{i,2}_t = J^{i}_t(Y^{i}_t)|_{I_t =2} $ according to the different regimes, we expect to have for $i=1$
 \begin{align} \label{eq:principalR1}
   J^{1,1}_t &= \esssup_{(Z^1, \theta^1)}\dbE^{\dbP^{\a^*}}_t\left[ - U^i(Y^{1}_T)\mathbf{1}_{\{\t_t >T\}} + J_{\tau_t}^{1,2}\mathbf{1}_{\{\t_t \le T\}} - \int_t^{\tau_t\wedge T} \big(U^i(\theta^1_u) -1 \big)du \right] \nonumber \\
             &= \esssup_{(Z^1, \theta^1)}\dbE^{\dbP^{\a^*}}_t\bigg[ - \b^{1,*}(t,T) U^i(Y^{1}_T)\nonumber \\
             & \hspace{40mm} + \int_t^T\beta^{1,*}(t,u) \big( a^*(Y_u^{2}-Y^{1}_u)J^{1,2}_u + 1- U^i(\theta^1_u)\big)du\bigg],
 \end{align}
 where 
  $$   \b^{1,*}(t,s):= \exp\left(-\int_t^s a^*(Y_u^{2}-Y^{1}_u)du\right), $$
 and similarly 
 \begin{align}  \label{eq:principalR2}
   J_t^{1,2} &= \esssup_{(Z^1, \theta^1)}\dbE^{\dbP^{\a^*}}_t\left[  J^{1,1}_{\tau_t}\mathbf{1}_{\{\t_t\leq T\}}\right]  \nonumber \\
             &= \esssup_{(Z^1, \theta^1)}\dbE^{\dbP^{\a^*}}_t\left[ \int^T_{t} \b^{2,*}(t,u) a^*(Y^1_u-Y^2_u) J^{1,1}_u du\right]
 \end{align}
 where
  $$   \b^{2,*}(t,s):= \exp\left(-\int_t^s  a^*(Y^1_t -Y^2_t) du\right). $$
Although it seems promising to solve the system of value functions $(J^{i, 1}, J^{i, 2})$ as in the agent problem, note that the admissible controls $(Z^i, \theta^i)$ are constrained to be $\dbF^X$-adapted, in particular, $(Z^i, \theta^i)$ cannot depend on the regimes. 
Therefore, if  the control problem is time-consistent, then we expect to find the same process $(Z^1, \theta^1)$ solve both the optimizations \eqref{eq:principalR1} and \eqref{eq:principalR2}, which is absurd. 
It is a contradiction to the  time-consistency assumption. 

\begin{remark}
Besides, the approach for solving the $n$-principal problem has another drawback. Note that each contract $\xi^i$ for $i\in\dbI_n$ is a function of the entire vector $W$, that is, the principal needs to know the other companies' noise in order to design his own contract. This  problem can be avoided once we consider an infinite number of principals (see Section \ref{sec:mfg}), where $\xi^i$ will be a function of $W^i$.
\end{remark}

In the rest of the paper, we propose two ways to address the time-inconsistency above:
  \begin{itemize}
    \item introduce a time-consistent ``suboptimal'' solution for the principals;
    \item consider the case with infinite number of principals using mean-field formulation.
  \end{itemize} 

We refer to the recent work of Hern\'andez and Possama\"i \cite{HP2021}. They developed a dynamic programming principle for a time-inconsistent agent, and their result is applied to principal-agent model with moral hazard in the thesis of Hern\'andez \cite{Hernandez2021}.

\section{A time-consistent ``suboptimal'' solution for the principals} \label{sec:secondbest} 

\subsection{PDE characterization}

In this section we assume that the principals only care about maximizing their profits during the period when the agent is hired, i.e., the principal $i$, instead of solving both optimization problems \eqref{eq:principalR1} and \eqref{eq:principalR2}, only focus on the former.
More precisely, we consider the admissible contracts in \eqref{eq:contractset} represented by $\dbZ^i:=(Y_0^i , Z^i, \theta^i)$. The principals now aim at searching for $\widehat\dbZ^i:=(\widehat Y_0^{i} , \widehat Z^{i}, \widehat \theta^{i})$ such that
\bea \label{secondbest_eq}
\widehat\dbZ^i \in \argmax_{\dbZ^i} J^{i,i}_0 \left(\dbZ^i; J^{i,j}(\widehat \dbZ^i; \dbZ^j ); \dbZ^j \right)
\eea
where for $i\neq j$
\begin{align}
 J^{i,i}_t(\dbZ^i; J^{i,j}; \dbZ^j) &=  \dbE^{\dbP^{\a^*}}_t\left[ - U^i(  Y^{i}_T)\mathbf{1}_{\{\t_t >T\}} + J_{\tau_t}^{ i,j}\mathbf{1}_{\{\t_t \le T\}} - \int_t^{\tau_t\wedge T} \big(U^i(\theta^i_u) -1 \big)du \right]  \notag\\
             &= \dbE^{\dbP^{\a^*}}_t\bigg[ - \b^{i,*}(t,T) U^i( Y^{i}_T)   \notag\\
             &\q\q\q\q + \int_t^T\beta^{i,*}(t,u)\big(a^*( Y_u^{j}- Y^{i}_u)J^{i,j}_u +1 - U^i(\theta^i_u)\big)du\bigg] \label{eq:Jii}\\
J^{i,j}_t(\dbZ^i; J^{i,i}; \dbZ^j) &= \dbE^{\dbP^{\a^*}}_t\left[ J^{i,i}_{\tau_t}\mathbf{1}_{\{\t_t\leq T\}}\right]  
	  =  \dbE^{\dbP^{\a^*}}_t\left[ \int^T_{{t}} \b^{j,*}(t,u) a^*( Y^i_u- Y^j_u)J^{i,i}_u du\right],\label{eq:Jij}
\end{align}
and $J^{i,j}(\dbZ^i; \dbZ^j)$ is the solution to the linear system \eqref{eq:Jii}, \eqref{eq:Jij}. 

The Nash equilibrium of the modified game can be characterized by a highly coupled PDE system. In order to weaken the coupling among the two principals and facilitate the computation of equilibrium, we require the further constraint:
\bea\label{eq:specialZ}
Z^i = (Z^i_1, Z^i_2), \q \mbox{with}\q 
Z^i_j = 0\q\mbox{for}~ i\neq j ,\q
\mbox{and}\q 
|Z^i_i |\leq M \q\mbox{for some}~ M>0.
\eea
In order to characterize the Nash equilibrium, we consider the following PDE system:
\bea\label{eq:PDEsys}
\begin{cases}
\pa_t v^{i,i} -  \min_{\th^i} \left\{ \th^i  \pa_i v^{i,i}+ U^i(\th^i) \right\}
+ \frac12 M^2 \big(\pa^2_{ii} v^{i,i}\big)^+  -c^*(y^j-y^i) \pa_i v^{i,i} \\
\hspace{9.25mm} -  \left(c^*(y^i-y^j)+ \th^{j,*} \right) \pa_j v^{i,i}
+\frac12 |z^{j,*}| ^2 \pa^2_{jj} v^{i,i} + \a^*(y^j-y^i) (v^{i,j} -v^{i,i}) +1=0\\

\pa_t v^{i,j}  - \left(c^*(y^j-y^i)+ \th^{i,*} \right) \pa_i v^{i,j} -  \left(c^*(y^i-y^j)+ \th^{j,*} \right) \pa_j v^{i,j}\\
\hspace{9.5mm} + \frac12 |z^{i,*}| ^2 \pa^2_{ii} v^{i,j} +\frac12 |z^{j,*}| ^2 \pa^2_{jj} v^{i,j} 
+ \a^*(y^i -y^j) (v^{i,i} -v^{i,j}) =0\\

v^{i,i}_T= - U^i(y^i), \q\q v^{i,j}_T = 0,
\end{cases}
\eea
where $\th^{i,*} = \argmin_{\th^i} \left( \th^i  \pa_i v^{i,i}+ U^i(\th^i) \right)$ and $z^{i,*} = \argmax_{|z^i|\le M} |z^i|^2 \pa^2_{ii} v^{i,i} =  M 1_{\{\pa^2_{ii} v^{i,i} \ge 0 \}}$.
Further the Nash equilibrium $(\widehat\dbZ^i)_{i=1, \cdots, n}$ in \eqref{secondbest_eq} should satisfy
\[\widehat \theta^i_t = \th^{i,*}(t, Y_t), \q \widehat Z^i_t = z^{i,*}(t, Y_t), \q \mbox{and}\q Y^i_0 \in \argmax_{y^i} v^{ii}(0, y). \]

\begin{remark}
Due to the observation in the following Section \ref{sec:heuristic} that given a large number of principals, there is a tiny  possibility for the agent to return to the same principal after leaving her company.  The alternative equilibrium \eqref{secondbest_eq} would make more sense provided that there are many principals in the game. Also one may find the reasoning for the constraint $Z^i_j = 0$ for $i\neq j$ in the mean-field analysis in Section \ref{sec:backwardchaos}. 

It is noteworthy that even for this simplified ``suboptimal'' equilibrium, the corresponding PDE system similar to \eqref{eq:PDEsys} is non-trivial to solve. In general, for the case with $n$ principals, one need to solve the PDE system with  $n^2$ coupled PDE's and of $n$ variables. Apparently, the conventional PDE solvers cannot handle the task when $n$ is large. 
In the following section we will numerically solve it in case there are only two principals using the explicite Euler scheme. 
\end{remark}
 
\subsection{Numerical tests}  \label{sec:numerical}

In this section, we numerically solve the PDE system \eqref{eq:PDEsys} for the game involving two principals, using the explicit Euler scheme. In order to further simply the equations, we assume that the principals do not pay the salaries $(\th_t)$ but only the final compensation $\xi=Y_T$. 

In aspect of the coefficients, we adopt the quadratic cost for the hesitation in Example \ref{eg:costhesistation}, namely,
\[c(a) = \frac12 a^2 \q\mbox{on}\q [0, \ol K]\]
 and set the constant $\ol K =1$,   choose the bound of the volatility in \eqref{eq:specialZ} to be $M =1$, and consider the time horizon $T=0.5$. As for the utility functions of the principals, we set 
\beaa
U^1(y) = e^{y} -1  \q\mbox{and}\q 
U^2(y) = \big(e^{0.1 y} -1\big) / 0.1,
\eeaa
so that Principal $1$ is more risk-averse than Principal $2$. 

Recall the function $c^*$ and $a^*$ in \eqref{eq:acstar}. By solving the PDE system  \eqref{eq:PDEsys}, we obtain the (sub-)optimal contract
\bea\label{eq:suboptimal_contract}
 \xi^i =  Y^i_0 - \int_0^T  c^*\big(Y^j_t-Y^i_t\big) dt + \int_0^T M 1_{\{\pa^2_{ii} v^{i,i}(t, Y^1_t, Y^2_t) \ge 0 \}}  dW_t,
\q\mbox{for $i \neq j \in \{1,2\}$}. 
\eea
Meanwhile, the optimal intensity at time $t$ for the agent to switch from Principal $i$ to Principal $j$ is equal to
\[a^*(Y^j_t-Y^i_t).\]

As a benchmark, we shall look into the situation where there is only one principal in the market. Following the same analysis as before, the principal chooses among the  contracts of the form
\beaa
\xi = Y_0 + \int_0^T Z_t dW_t.
\eeaa
Since  the principal is  assumed to be risk-averse, the optimal choice is clearly $Z\equiv 0$, and the value function of the principal  reads
\beaa
v(y) = -U(y) + 1 * T,
\eeaa
where we recall $1$ is the drift of the output diffusion $X$. Moreover, since $U$ is strictly increasing, the principal would choose $Y_0$ equal to the reservation value $R$ of the agent in order to maximizer her profit.  

\begin{figure}
	\centering
	\includegraphics[width=16cm]{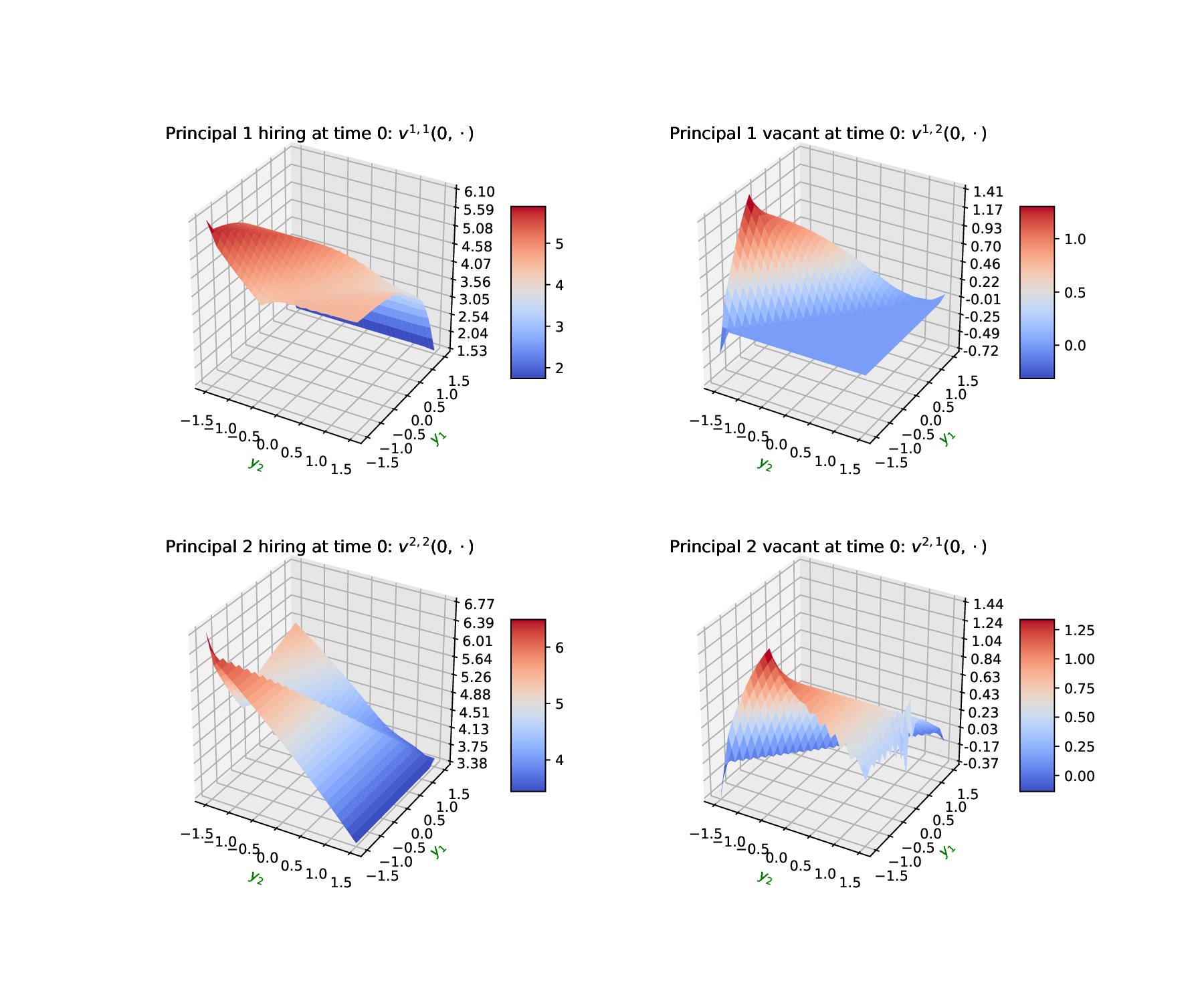}
	\caption{Value functions at time $0$}\label{fig:3D}
\end{figure}

In Figure \ref{fig:3D} we can view the numerical solution to the value functions of the two principals at time $0$.
The four graphes show the values of functions $v^{1,1}(0, \cd), ~ v^{1,2}(0, \cd), ~ v^{2,2}(0, \cd), ~ v^{2,1}(0, \cd)$ respectively. Already we can see that the value functions $y^1\mapsto v^{1,1}(0, y^1, \cd),~ y^2\mapsto v^{2,2}(0, \cd, y^2) $ are no longer concave. As a result the optimal choice of $Z^{i, *}=M 1_{\{\pa^2_{ii} v^{i,i} \ge 0 \}}$ is no longer $0$ (the optimal choice for a single principal), in other word, the competition does change the strategies of the principals. 
In order to make this point more clear, we simulate the continuation utility values $(Y^1_t)_{t\in [0,T]}$ and $(Y^2_t)_{t\in [0,T]}$ of the agent under the contacts \eqref{eq:suboptimal_contract}, as well as his decision of switching.  By setting $Y^1_0 = 0.3, Y^2_0 = -0.5$ and letting the agent start being hired by Principal $2$, we obtain the simulation result in Figure \ref{fig:simu}. As we see, at the beginning Principal $2$ sets the volatility in the contract $Z^{2,*}$ to be non-zero as an attempt to possibly close the gap between $Y^1$ and $Y^2$ so as to keep the agent working for her and later she sets the volatility $Z^{2,*}=0$ because she would like to allow the agent to leave in order to avoid the terminal payment.  Meanwhile since $Y^1$ is always bigger than $Y^2$, the agent is always interested in switching to Principal $1$ (in other word, the intensity $a^*(Y^1_t-Y^2_t)>0$) and eventually he manages it in this simulation. 

\begin{figure}
  \centering
  \includegraphics[width=15cm]{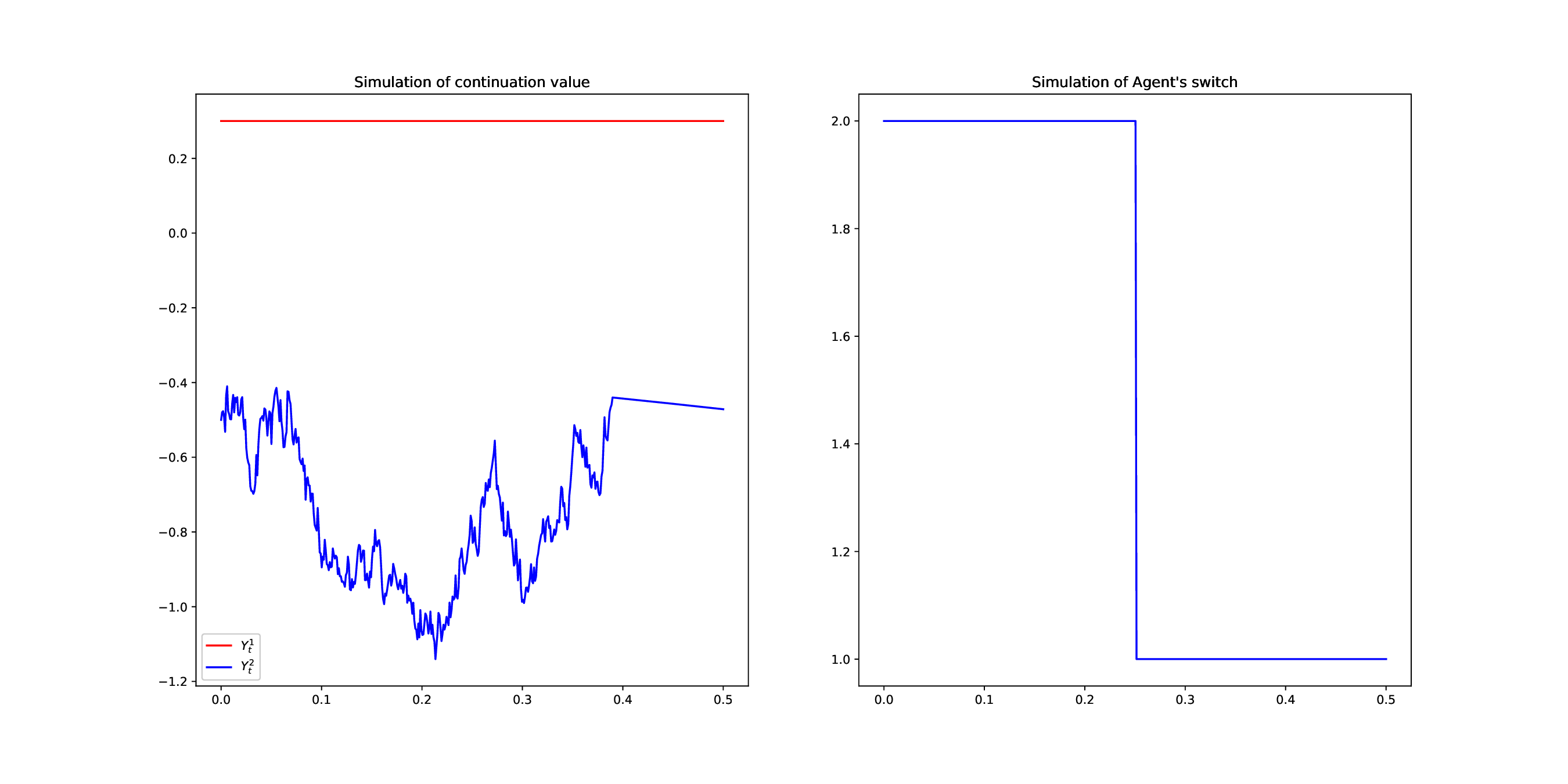}
  \caption{Simulation of the continuation values and the agent's decision}\label{fig:simu}
\end{figure}

Further in Figure \ref{fig:2D}, we observe how the position  $y^j$ influences the value function $v^{ii}$ for $i\neq j$.
We draw in different colors the value functions given different values of $y^j$, as well as in black the value function of the single principal without competition. Here are some   discoveries:
 
\begin{figure}
  \centering
  \includegraphics[width=14cm]{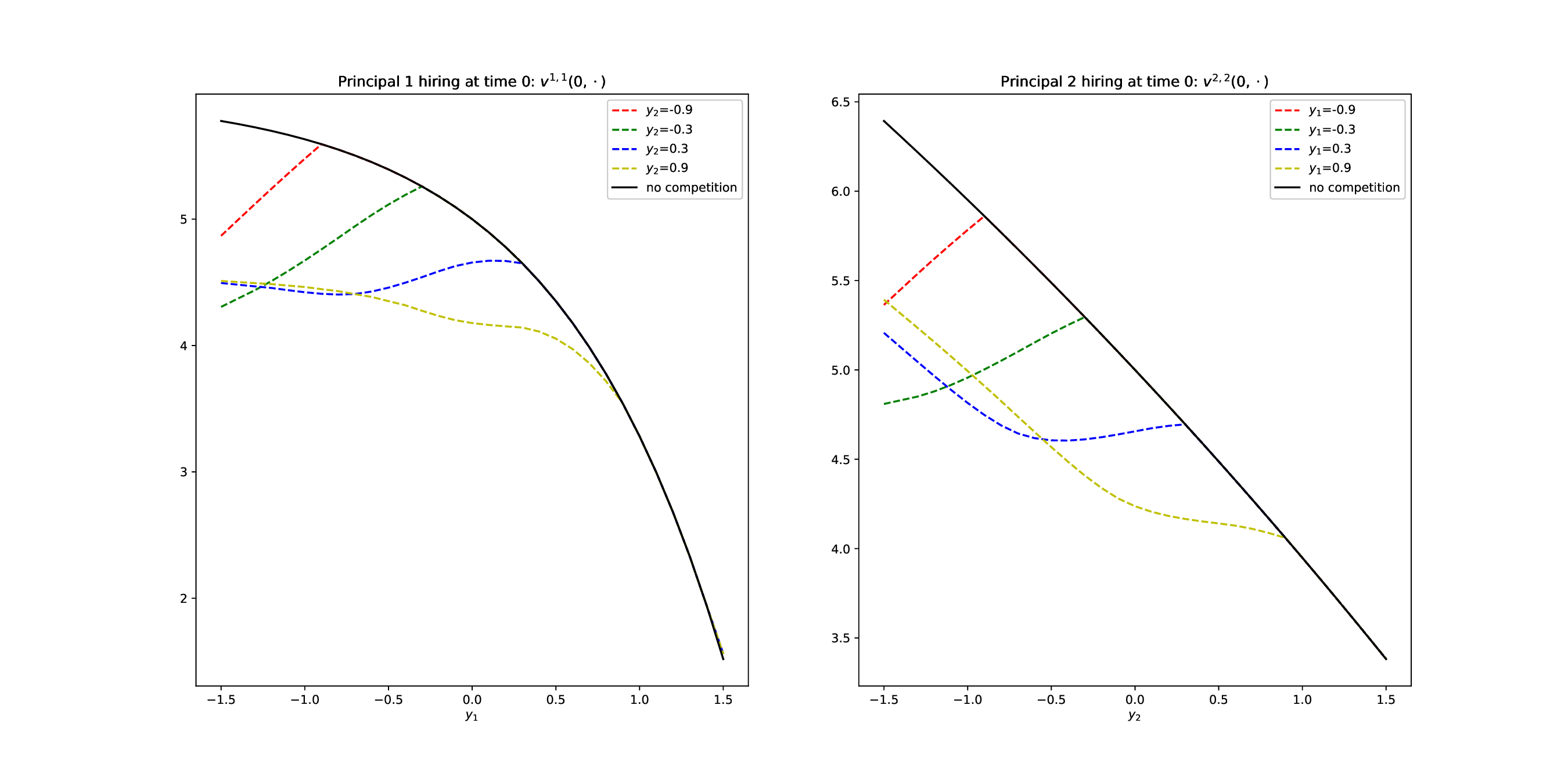}
  \caption{Value functions at time $0$} \label{fig:2D}
\end{figure}

\begin{figure}
	\centering
	\includegraphics[width=10cm]{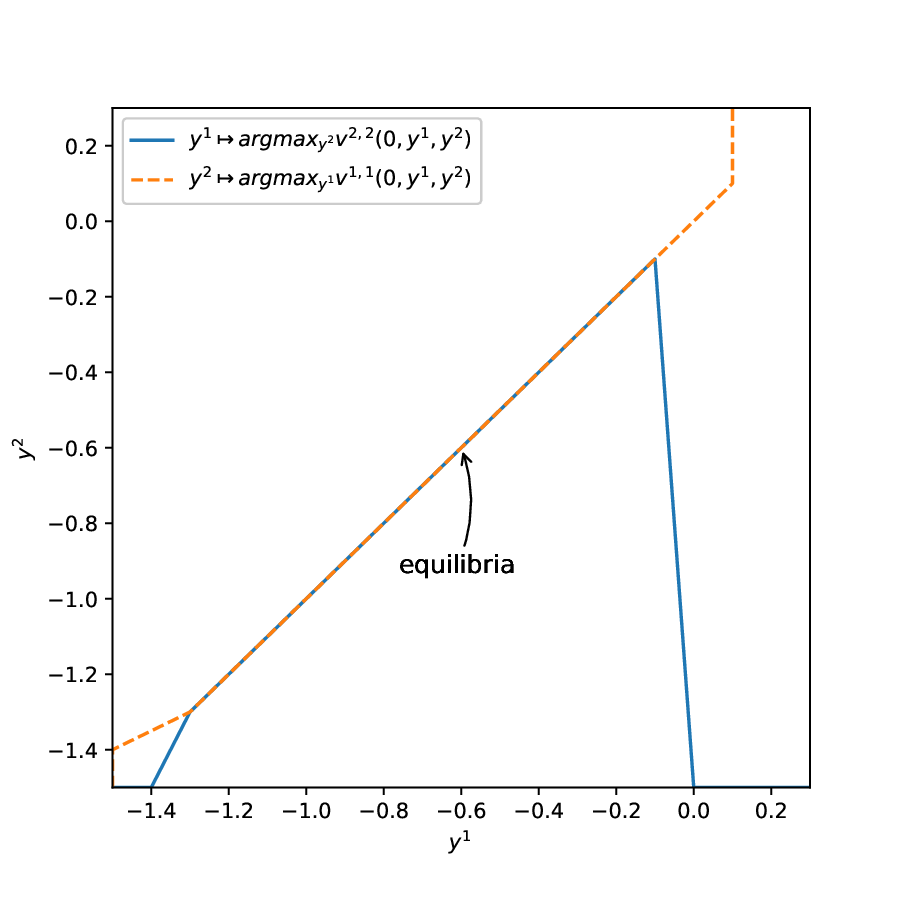}
	\caption{Equilibria at time $0$} \label{fig:equilibria}
\end{figure}

\begin{itemize}
   \item Most significantly, the value function $v^{i,i}$ coincides with the value function without competition, once the value of $y^i$ surpasses that of $y^j$;
   \item The value function $y^i\mapsto v^{i,i}(0, y^i, \cd)$ is convex only when $y^i<y^j$ and the spread is not large, in particular, it is the  scenario where the principals are willing to set the volatility to be non-zero as an attempt to keep the agent working for them, and the more risk-averse principal appears to be more reluctant to submit this risk; 
   \item Recall that the principals also choose $(Y_0^i)_{i=1,2}$ to reach an equilibrium such that
           \begin{align*}
        	 Y_0^1 = \argmax_{y^1} v^{1,1}(0,y^1, Y_0^2)
        	  \q\mbox{and}\q
          	 Y_0^2 = \argmax_{y^2} v^{2,2}(0,Y_0^1, y^2).
           \end{align*}
         As shown in Figure \ref{fig:equilibria}, we find the equilibria $(Y_0^i)_{i=1,2}$ on the domain $[-1.5,1.5]\times[-1.5,1.5] $. Curiously,  in this example the equilibria  are all on the diagonal $Y^1_0=Y^2_0$. Taking into account that the value functions $v^{1,1}, v^{2,2}$ seem locally concave on the diagonal (see Figure \ref{fig:3D}), we shall have $Z^{1,*} = Z^{2,*} = 0$ there. Since in this example $c^*(0)=0$, eventually we should have
		   \begin{align*}
		     Y^1_T &=  Y^1_0 -  \int_0^Tc^*\big(Y^2_t-Y^1_t\big)  dt + \int_0^T Z^{1,*} _t\cdot dW_t = Y^1_0,\\
		     Y^2_T &=  Y^2_0 -  \int_0^Tc^*\big(Y^1_t-Y^2_t\big)  dt + \int_0^T Z^{2,*} _t\cdot dW_t = Y^2_0,
		   \end{align*}
		that is, both principals would offer constant rewards as in the single principal case.  However, the constant reward $Y^1_0=Y^2_0$ may be still higher than the reservation value of the agent.
\end{itemize}

\section{Infinite number of principals: mean field approach} \label{sec:mfg}
 
\subsection{Heuristic analysis}\label{sec:heuristic}

Heuristically, as $n\rightarrow \infty$ the equation \eqref{eq:BSDEnagent}, which characterizes the agent's value function, converges to
  \begin{equation} \label{eq:limitY}
    dY^i_t = -\left( \int c^*(y-Y^i_t) p_t(dy) +\theta^i_t  \right)dt + Z^i_t dW^i_t,  \q  p_t =\cL(Y^i_t).
  \end{equation}
Besides, it follows from \eqref{eq:optimalintensity} and the definition of the discount factor $\b$ that the optimal $\a^*$ and $\b^*$ converge to
  \begin{align} \label{eq:convintensity}
     \alpha^{j,*}_t \rightarrow 0 \,\mbox{ for }~j\neq i, \qquad \sum_{j\neq i} \a^{j,*}_t \rightarrow \a^*_t:= \int a^*(y-Y^i_t) p_t(dy),
  \end{align}
  and $\beta^{n,*}_t \to \beta^*_t := e^{-\int_0^t \a^*_s \mathrm{d}s}$. In particular, at time $t$ the agent has $\a^*_t:= \int a^*(y-Y^i_t) p_t(dy)$ as intensity to leave the current principal, so his decision of switching is clearly influenced by the continuation value of the contact of the current principal  $Y_t$, and  by the distribution of the continuation values of the contracts of all principals $p_t$.

\begin{remark}  \label{rem:mf}
\begin{itemize}
   \item To be fair, the form of the limit equation \eqref{eq:limitY} does not entirely follow the intuition. 
         Note that we removed the $e_i$ in front of the term $Z^i_t dt$ and replaced the stochastic integrator $dW_t$ by $dW^i_t$.  
         At this stage, let us admit that once in the drift of the BSDE there is no longer dependence on other $Y^j$ ($j\neq i$), the system would be decoupled, i.e., the equation of $Y^i$ no longer needs the information of $W^j$ ($j\neq i$), and it leads to the limit form \eqref{eq:limitY}. 
         We will justify the mean field formulation in Section \ref{sec:approximation}.
   \item The first observation in \eqref{eq:convintensity} implies that in the limit case once the agent leaves the company $i$, he will have null probability to come back due to the fact that we have a continuum of players. 
         Therefore, the principal should be only interested in the optimization for the regime where the agent works for her. 
         Remember that in the $n$-principal problem, the time-inconsistency is due to the optimizations in the different regimes. 
         Therefore, it is reasonable that the mean field formulation would bypass this difficulty. 
\end{itemize}
\end{remark}

Recall the principal's problem \eqref{eq:principalfirst}. It follows from the observation in Remark \ref{rem:mf} that 
  \begin{align*}
    V^{P,i}_0 &= \begin{cases}
                    \dbE^{\dbP^{\a^*}}[X^i_0 + W_T] = X^i_0, & \mbox{as }\,\, I_0\neq i, \\
                    \displaystyle\max_{(\xi^i, \theta^i)} \dbE^{\dbP^{\a^*}}\left[X^i_\t 1_{\{\t \le T\}} + \big(X^i_T- U(\xi^i)\big)\mathbf{1}_{\{\t >T\}} -\int_0^{\t\we T} U(\theta^i_u) du\right],& \mbox{as }\,\, I_0=i.
                 \end{cases} 
  \end{align*}
Therefore, in the upcoming mean field game, we should only keep the nontrivial regime ($I_0=i$), i.e., denote
  \begin{align}
    V^{P,i}_0 := &\, \max_{(\xi^i, \theta^i)} \dbE^{\dbP^{\a^*}}\left[W^i_\t 1_{\{\t \le T\}} + \big(W^i_T- U(\xi^i)\big)\mathbf{1}_{\{\t >T\}} -\int_0^{\t\we T} \big( U(\theta^i_u) - 1\big) du\right] \notag \\
               = &\, \max_{(\xi^i, \theta^i)} \dbE^{\dbP^{\a^*}}\left[ \int_0^T  \beta^*_u \big(\alpha^*_u W^i_u - U(\theta^i_u)+1\big) du + \beta^*_T \big(W^i_T- U(\xi^i) \big) \right].\label{eq:mfprincipal1}
  \end{align}

Though the rigorous definition of the mean field game will be introduced in the next section, we are ready to give a simple description of the mean field equilibrium we are searching for. 
In the mean field formulation, we remove the superscript $i$, and instead we use the notations $\ol W, \ol Y,\ol Z$. 
 Following the dynamic programming approach in \cite{Sannikov08, CPT18}, given $(p_t)_{t\in[0,T]}$, consider the contracts in the form:
  $$ \xi \in \Xi(p) = \left\{\ol Y_T: \, \ol Y_T= \ol Y_0  - \int_0^T\left(\int c^*(y- \overline{Y}_u) p_u(dy) + \theta_u\right)du +  \int_0^T \ol Z_ud\ol W_u\right\}. $$
It follows from \eqref{eq:mfprincipal1} that each principal faces the optimization: 
\begin{equation}\label{eq:MFPrincipalProblem}
V^{P}_0(p) = \sup_{(\ol Y_0, \ol Z, \theta)}\dbE^{\dbP^{\a^*}}\left[ \int_0^T  \beta^*_u \big(\alpha^*_u \ol W_u - U(\theta_u)+1\big) du + \beta^*_T \big(\ol W_T- U( \ol Y_T)\big) \right].
\end{equation}
The maximizer $\big(\ol Y_0^*, \ol Z^*, \theta^*\big)$ would define a  process $\ol Y^*$. The law $(p_t)_{t\in [0,T]}$ is a mean field equilibrium, if $p_t = \cL\big(\ol Y^*_t\big)$ for all $t\in [0,T]$.

\subsection{Mean field game: existence}

Our rigorous definition of the mean field game and the argument to prove its existence  rely largely on the framework in Lacker \cite{Lacker15}. 

In this part of the paper, we denote the  canonical space by $ \overline{\Omega} = C([0,T], \mathbb{R}^2) $, the canonical process by $\Th := (\ol W, \ol Y) $ and the canonical filtration by $ \overline{\mathbb{F}} $.  
Given $p\in \cP(\ol \Om)$, define $p_t : = p\circ \overline{Y}^{-1}_t \in \cP(\dbR)$ and
  \begin{align*}
    \cW(p):=\Big\{\dbP^{\l, \eta}\in \cP(\ol\Om):&~~\mbox{$\ol W$ is a $\dbP^{\l,\eta}$-Brownian motion, and }\dbP^{\l,\eta}\mbox{-a.s. }\,\\\
			&~~ d \ol Y_t = -\int \big( c^*(y-\ol Y_t) p_t(dy) + \theta_t\big) dt + \eta_t d\ol W_t,~~  \mathcal{L}(\ol Y_0)=\l, \\
			&~~ \mbox{for some}~~ \l \in \cI,~~\eta\in \cU,~~ \theta\in\dbH^2\big(\dbP^{\l,\eta}\big)
            \Big\}
  \end{align*}
  where, for technical reasons, we define 
  \begin{align*}
  	 \cI &:= \big\{\l \in \cP(\dbR):~\mbox{$\l$ with a compact support $K$ in $[R,\infty)$}\big\}  \\
  	 \cU &:= \big\{\eta~ \ol\dbF\mbox{-adapted}: ~\mbox{$\eta$ takes values in a compact set $\Sigma$ in $\dbR$}\big\}.
  \end{align*}
In other words, we will consider a mean field game in which the choice of the initial value and the volatility is constrained in compact sets. Further define
  \begin{align*}
    J(\dbP, \theta;\,p):=\dbE^{\dbP}\left[ \int_0^T  \beta^*_u \big(\alpha^*_u \ol W_u - U(\theta_u)+1\big) du + \beta^*_T \big(\ol W_T- U( \ol Y_T)\big) \right],
  \end{align*}
  where we recall $\a_t^*= \int a^*(y-\ol Y_t) p_t(dy)$ and $\b_t^*=\exp\big(-\int_0^t \a^*_s ds\big)$.

\begin{assumption} \label{assum:U}
We assume that $a^*:\dbR\rightarrow \dbR $ is Lipschitz continuous. $U$ is  convex and of $q$-polynomial growth, i.e.  there are constants $C < C'$ and $q>1$ such that
  \begin{align*}
  	C(|\theta|^q -1) \le U(\theta) \le C' (|\theta|^q +1).
  \end{align*}
\end{assumption}

\begin{theorem} \label{thm:mfg}
 Under Assumption \ref{assum:c} and \ref{assum:U}, there exists  $p\in \cP(\ol \Om)$, $\big(\widehat\l, \widehat\eta\big)\in \cI\times\cU$ and an $\ol\dbF$-adapted process $\widehat \theta$ such that
   \begin{align*}
   	 \big(\dbP^{\widehat\l,\widehat\eta}, \widehat \theta \big) \in \argmin_{\dbP\in \cW(p), \theta} J(\dbP,\theta;\,p)
   	 \q\mbox{and}\q
   	 p = \dbP^{\widehat\l,\widehat\eta}.
   \end{align*}
\end{theorem}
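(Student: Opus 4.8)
The plan is to recast the equilibrium condition as a fixed point of a best-response correspondence on a compact convex subset of $\cP(\ol\Om)$ and to invoke the Kakutani--Fan--Glicksberg theorem, following the relaxed-control methodology of Lacker \cite{Lacker15}. Concretely, for a fixed $p\in\cP(\ol\Om)$ I would let the principal solve the control problem implicit in $J(\cdot,\cdot;p)$ over $\cW(p)$, collect the resulting optimal laws, and define the set-valued map
\[
  \Phi(p):=\big\{\dbP^{\widehat\l,\widehat\eta}:~(\dbP^{\widehat\l,\widehat\eta},\widehat w)\ \text{optimal for }J(\cdot,\cdot;p)\big\}.
\]
A measure $p$ with $p\in\Phi(p)$ is exactly the sought equilibrium, since then $p=\dbP^{\widehat\l,\widehat\eta}$ with $(\dbP^{\widehat\l,\widehat\eta},\widehat w)\in\argmin J$. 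Thus the whole argument reduces to checking the hypotheses of the fixed-point theorem: an invariant compact convex domain, and upper hemicontinuity of $\Phi$ with nonempty, compact, convex values.

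First I would secure compactness. The constraints built into $\cW(p)$ are tailored for this: $\l$ ranges over the tight family $\cI$ (compact support in $[R,\infty)$) and the volatility $\eta$ over controls valued in the compact set $\Sigma$, so the diffusion coefficient of $\ol Y$ is bounded. The drift of $\ol Y$ is $\int c^*(y-\ol Y_t)p_t(dy)+w_t$; since $c^*$ is Lipschitz (Assumption \ref{assum:c}) the mean-field term grows at most linearly, and the only unbounded ingredient is the wage $w$. Here the $q$-polynomial growth of $U$ (Assumption \ref{assum:U}, $q>1$) supplies coercivity: along any near-optimal sequence the penalty $\dbE\int_0^T U(w_u)\,du$ is bounded, hence $w$ is bounded in $L^q$ and uniformly integrable. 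This yields tightness of the admissible laws on $C([0,T],\dbR^2)$ via a Kolmogorov/Aldous criterion, and lets me confine the fixed point to a compact convex set $\cK\subset\cP(\ol\Om)$ that $\Phi$ maps into itself.

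To obtain convex values and a closed graph I would pass to relaxed controls, lifting $(w,\eta)$ to a measure on $[0,T]\times(\dbR\times\Sigma)$ with time-marginal Lebesgue, so that the controlled dynamics and the reward become affine in the control; the $L^q$-bound on $w$ makes the relaxed controls live in a compact set, and the set of optimizers is then nonempty, compact and convex. The remaining continuity input is that $p\mapsto J(\dbP,w;p)$ and $p\mapsto\cW(p)$ vary continuously: because $c^*$ and $a^*$ are Lipschitz, the drift coefficient and the intensity $\a^*_t=\int a^*(y-\ol Y_t)p_t(dy)$ depend continuously (in a Wasserstein sense) on the marginals $p_t$. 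Combined with a stability estimate for the controlled SDE this gives upper hemicontinuity of $\Phi$, and Kakutani--Fan--Glicksberg produces a fixed point $p\in\Phi(p)$. Finally I would un-relax: since $w$ enters the drift of $\ol Y$ affinely and is penalized by the convex function $U$, a measurable selection (as in \cite{Lacker15}) turns the equilibrium relaxed control into a genuine $\ol\dbF$-adapted $\widehat w$ together with $(\widehat\l,\widehat\eta)\in\cI\times\cU$, giving the statement.

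The main obstacle I anticipate is the continuity of the reward in the pair (control, mean field). The coupling to $p$ is nonlinear and path-dependent: it enters not only through the linearly-growing drift term but, more delicately, through the intensity $\a^*$ and the exponential discount $\b^*_t=\exp\big(-\int_0^t\a^*_s\,ds\big)$, which are nonlinear functionals of the whole trajectory of marginals $(p_s)_{s\le t}$ and of $\ol Y$. Showing that weak convergence $p^n\to p$ together with convergence of the (relaxed) controls forces $J(\dbP^n,w^n;p^n)\to J(\dbP,w;p)$---so that the argmin correspondence is closed---requires careful uniform-integrability control of the unbounded wage $w$ inside these exponential weights, and this is where the bulk of the technical work will lie.
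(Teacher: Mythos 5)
Your proposal follows essentially the same route as the paper: both pass to the relaxed (controlled-martingale-problem) formulation of Lacker \cite{Lacker15}, obtain a relaxed equilibrium via compactness of the constrained initial laws $\cI$ and volatilities $\cU$ together with a fixed-point argument, and then de-relax using the convexity of $U$ (the Filippov-type condition) and the measurable selection lemma of \cite{HL90}. One small correction: the wage $w$ does not appear inside the exponential weights $\b^*$ --- the intensity $\a^*$ depends only on $\ol Y$ and $p$ and is bounded since $a^*$ takes values in a compact set --- so the continuity of $J$ is less delicate than you anticipate.
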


\begin{remark}\label{rem:mfg}
\begin{itemize}
\item This definition of mean field game via the weak formulation of stochastic control follows the same spirit as those in Carmona \& Lacker \cite{CL15} and Lacker \cite{Lacker15}.  However, here we also include the control of the initial distribution $\l$ of $\ol Y$. 

\item It is noteworthy that among the triple of control $(\l,\eta, \theta)$ of this mean field game,  $\l$ takes values of measures, that is, the principals are allowed to play a mixed strategy. 

\item If we constrain $\theta$ to take values in a compact set in $\dbR$, then given a bounded function $U$ which is convex on this compact set, the mean field game exists.
\end{itemize}
\end{remark}

We realize that Theorem \ref{thm:mfg} can be proved by essentially the same argument as in \cite{Lacker15}. 
In the rest of the section, we shall outline the strategy of the proof and refer the readers for details to \cite{Lacker15}. 

First, we shall linearize the functional $J$ using the so-called relaxed control. Denote by $\cD$ the set of measures $q$ on $[0,T]\times \Sigma\times \dbR$.  Instead of controlling via the processes $\eta$ (taking values in $\Sigma$) and $\theta$ (taking values in $\dbR$), we shall control through the measure $q$ in the relaxed formulation.  The canonical space for the relaxed control becomes $\widehat\Om :=\ol\Om\times \cD$. Denote the canonical process by $(\ol W,\ol Y, \L)$, and the canonical filtration by $\widehat\dbF$. Note that one may define a $\cP(\Sigma\times \dbR)$-valued $\widehat\dbF$-predictable process $(\L_t)_{0\le t\le T}$ such that $\L(dt, d\eta, d\theta) = \L_t (d\eta, d\theta) dt$. Here we abuse the notations using $\eta, \theta$ to represent points in $(\Sigma, \dbR)$.

Denote by $ \mathcal{C}^{\infty}_0(\dbR^2) $ denote the set of infinitely differentiable functions $ \phi:\dbR^2\mapsto\dbR $ with compact support. Define the generator $ L $ on $ \mathcal{C}^\infty_0(\dbR^2) $ by
  \begin{equation*}
      L^{p,\eta}\phi(t,x,y) = \left(1, -\int c^*(y' -y ) p_t(dy') - \theta_t \right)\nabla\phi + \frac12 \big(\pa_{xx}\phi + \eta^2 \pa_{yy}\phi + 2\eta\pa_{xy}\phi\big),
  \end{equation*}
  for $ (t,x,y,p,\eta)\in[0,T]\times\dbR^2\times\mathcal{P}(\ol\Om)\times\Sigma$. Further define
  \begin{align*}
    M_t^{p,\phi} := \phi(\ol X_t,\ol Y_t) - \int_{0}^{t} \int_{\Sigma\times \dbR} L^{p,\eta}\phi(s,\ol X_s,\ol Y_s) \L_s(d\eta)ds.
  \end{align*}

\begin{definition}
  Given $ p\in\cP(\ol\Om) $, define the set of the controlled martingale problems:
   \begin{align}   \label{eq:Lintegrable}
     \cR(p) = \Big\{\widehat\dbP\in \cP\big(\widehat\Om\big): &~~ \widehat\dbP\mbox{-a.s.} ~ \ol Y_0 \sim \l, ~\mbox{for some $\l\in \cI$}, \notag \\
                                                              &~~ \dbE^{\widehat\dbP}\left[\int_0^T |\L_t|^{q} dt\right] <\infty,  \\
                                                              &~~ M^{p,\phi}~\mbox{is a $\widehat\dbP$-martingale for each $\phi\in \mathcal{C}^{\infty}_0(\dbR^2)$} \Big\}, \notag
   \end{align}
   where $|\L_t|^q: = \int_{\Sigma\times \dbR} |(\eta,\theta)|^q \L_t(d\eta,d \theta)$.
\end{definition}

\no Further, in the relaxed formulation the object function of the principals reads:
  \begin{align*}
    \hat J(\widehat\dbP;\,p) := \dbE^{\widehat\dbP}\left[ \int_0^T \int_{\dbR}\beta^*_u \big( \alpha^*_u \ol W_u - U(\theta)+1\big) \L_u(d \theta) du + \beta^*_T \big(\ol W_T- U( \ol Y_T)\big) \right],
  \end{align*}
  and define the set of the optimal control:
  \begin{align*}
    \cR^*(p) := \argmax_{\widehat\dbP\in \cR(p)} \hat J\big(\widehat\dbP;\,p\big).
  \end{align*}
 We say $\widehat\dbP\in \cP(\widehat\Om)$ is a relaxed mean field game if $\widehat\dbP\in \cR^*\big(\widehat\dbP\circ (\ol X, \ol Y)^{-1}\big)$. 
\begin{theorem}
 Under Assumption \ref{assum:c} and \ref{assum:U}, there exists a relaxed mean field game.
\end{theorem}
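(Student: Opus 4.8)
The plan is to obtain a relaxed mean field game as a fixed point of the best-response correspondence and to invoke the Kakutani--Fan--Glicksberg fixed point theorem, following the blueprint of \cite{Lacker15}. I would consider the set-valued map
$$ \Phi:\, p \longmapsto \big\{\widehat\dbP\circ(\ol X,\ol Y)^{-1}:\ \widehat\dbP\in \cR^*(p)\big\}\subseteq \cP(\ol\Om), $$
so that a fixed point $p\in\Phi(p)$ is precisely the $(\ol X,\ol Y)$-marginal of some $\widehat\dbP\in\cR^*(p)$ with $p=\widehat\dbP\circ(\ol X,\ol Y)^{-1}$, i.e.\ a relaxed mean field game. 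The first step is to pin down a nonempty, convex, compact subset $\cK\subseteq\cP(\ol\Om)$ that is stable under $\Phi$, so that the fixed point theorem applies on $\cK$. Because the initial law lies in the set $\cI$ of measures with common compact support, the volatility $\eta$ ranges over the compact set $\Sigma$, and the drift of $\ol Y$ is controlled through the Lipschitz map $c^*$, the time-marginals $p_t$ produced by admissible controls have uniformly controlled moments; this confines $\Phi$ to a weakly compact convex set $\cK$, which I would take as the domain.

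Next I would establish the structural properties of $\cR^*(p)$ required by Kakutani--Fan--Glicksberg. Relative compactness of $\cR(p)$ in the weak topology on $\cP(\widehat\Om)$ follows from a tightness argument: the $\ol X$-marginal is fixed, the $\ol Y$-dynamics have volatility valued in the compact $\Sigma$, and the drift integrability is guaranteed by the moment constraint $\dbE^{\widehat\dbP}[\int_0^T |\L_t|^q dt]<\infty$ with $q>1$, which also rules out escape of mass in the unbounded $w$-variable. The martingale-problem condition and the moment bound are closed under weak convergence, so $\cR(p)$ is weakly compact. Since all constraints defining $\cR(p)$ are linear (or convex) in $\widehat\dbP$ and the functional $\hat J(\cdot;p)$ is affine in $\widehat\dbP$, the set $\cR(p)$ is convex; moreover $\hat J(\cdot;p)$ is upper semicontinuous, because $a^*$ takes values in the compact $K$ makes $\a^*,\b^*$ bounded and the bound $U\ge C(|w|^q-1)$ renders the running cost bounded above and uniformly integrable. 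Hence the maximiser set $\cR^*(p)$ is nonempty, convex, and compact.

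The crux of the proof, and the step I expect to be the main obstacle, is the upper hemicontinuity (closed graph) of $p\mapsto \cR^*(p)$. Here the measure argument $p$ enters in two places at once: through the drift $\int c^*(y'-\ol Y_t)p_t(dy')$ of $\ol Y$, and through the pair $\a^*_t=\int a^*(y-\ol Y_t)p_t(dy)$, $\b^*_t=\exp(-\int_0^t\a^*_s\,ds)$ appearing in $\hat J$. The Lipschitz continuity of $c^*$ (Assumption \ref{assum:c}) and of $a^*$ (Assumption \ref{assum:U}) makes both dependencies continuous for the weak topology on the marginal flow $(p_t)$, so that the generator $L^{p,\eta}$ and the integrand of $\hat J$ converge along $p^n\to p$. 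Given $p^n\to p$ and $\widehat\dbP^n\in\cR^*(p^n)$ with $\widehat\dbP^n\to\widehat\dbP$, I would pass to the limit in the martingale problem to get $\widehat\dbP\in\cR(p)$, and then argue optimality: joint continuity of $(\widehat\dbP,p)\mapsto \hat J(\widehat\dbP;p)$ ---the delicate point, requiring uniform integrability of $U(w)$ and $U(\ol Y_T)$ from the $q$-moment bound together with the polynomial growth of $U$--- combined with a recovery-sequence argument transferring any competitor in $\cR(p)$ back along $\cR(p^n)$ yields $\hat J(\widehat\dbP;p)\ge \hat J(\widehat\dbP';p)$ for every $\widehat\dbP'\in\cR(p)$, that is $\widehat\dbP\in\cR^*(p)$. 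With nonempty, convex, compact values and closed graph over the compact convex domain $\cK$, Kakutani--Fan--Glicksberg produces a fixed point, which is the desired relaxed mean field game.
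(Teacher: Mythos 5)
Your proposal follows the same route as the paper, which simply invokes Lacker's fixed-point machinery: the paper's proof is a two-line remark that the only new feature is the controlled initial law $\l$, constrained to measures supported in a fixed compact set, and that otherwise ``the rest of the argument would follow the same lines'' of \cite[Sections 4 and 5]{Lacker15}, i.e.\ exactly the Kakutani--Fan--Glicksberg argument on the relaxed best-response correspondence that you describe. So in spirit your write-up is a faithful (and more detailed) expansion of what the paper delegates to the reference.

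One step, however, is asserted for the wrong reason and is false as stated: $\cR(p)$ is \emph{not} weakly compact. The defining condition $\dbE^{\widehat\dbP}[\int_0^T|\L_t|^q\,dt]<\infty$ is a finiteness requirement imposed measure by measure, not a uniform bound over the family; e.g.\ the deterministic controls $w\equiv n$ give a sequence in $\cR(p)$ whose $\cD$-marginals are not tight, since $w$ ranges over all of $\dbR$. This is precisely why the paper's proof inserts the parenthetical ``(in the case $\L$ is truncated)'': Lacker's scheme first truncates the control space so that $w$ lives in a compact set (making $\cR(p)$ genuinely compact and the fixed-point theorem applicable), and only afterwards removes the truncation by a limiting argument, using the coercivity $U(w)\ge C(|w|^q-1)$ with $q>1$ to show that optimal controls for the truncated problems satisfy a moment bound uniform in the truncation level. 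You do invoke this coercivity, but only in support of upper semicontinuity of $\hat J$; to repair the argument you must either run the truncation-and-limit step explicitly, or replace $\cR(p)$ by the sub-level set $\{\widehat\dbP\in\cR(p):\hat J(\widehat\dbP;p)\ge \hat J(\widehat\dbP^0;p)\}$ for a fixed reference control, on which the coercivity does yield a uniform $q$-moment bound and hence tightness. The remaining ingredients of your sketch (convexity, closed graph via stability of the martingale problem under $p^n\to p$, continuity of $\hat J$) match the cited argument.
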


\begin{proof}
  Our setting slightly distinguishes from the one in \cite{Lacker15}, because we allow to control the initial law $\l$ of $\ol Y$. 
  However, since we constrain the choice of $\l$ among the distribution in $\cP(K)$ where $K$ is a compact set in $\dbR$, we are still able to prove the tightness of $\cR(p)$ (in the case $\L$ is truncated), and the rest of the argument would follow the same lines in \cite[Section 4 and 5]{Lacker15}.
\end{proof}

It remains to construct a strict mean field game (as in Theorem \ref{thm:mfg}) based on a relaxed one. Again we can follow the classical argument. 
We shall only provide the sketch of the proof, for more details we refer the readers to \cite[Proof of Theorem 3.7]{Lacker15}.

\begin{proof}[Proof of Theorem \ref{thm:mfg}]
  Let $\widehat\dbP$ be a relaxed mean field game. 
  First, by the same argument as in the proof of \cite[Proof of Theorem 3.7]{Lacker15}, we may find an $\ol\dbF$-adapted process $\widehat q:[0,T]\times \ol\Om \rightarrow \cP(\Sigma\times\dbR)$ such that 
    \begin{equation}  \label{eq:qhat}
       \widehat q(t, \ol W,\ol Y) = \dbE^{\widehat\dbP}[\L_t | \ol\cF_t], \q \widehat\dbP\mbox{-a.s.}\q t\in [0,T].
    \end{equation}
  Further, for each $(t,w,y,p)\in[0,T]\times\overline{\Omega}\times\cP(\ol\Om)$, since $\a^*, \b^*$ are always nonnegative and function $U$ is convex, the following subset
    \begin{equation*}
       K(t,\theta,y,p):=\left\{\left(\eta^2,\th,l\right):~ \eta\in\Sigma, ~ l\leq f(t,x,y,p,\theta):=\beta^*_t(y,p) \big(\alpha^*_t(y,p) \theta_t -U(\th)\big)\right\} 
    \end{equation*}
    is convex. 
  This verifies the ``convexity'' assumption in \cite[Assumption (Convex)]{Lacker15}. 
  Therefore, using the measurable selection result in \cite[Lemma 3.1]{HL90}, there exist $\ol\dbF$-adapted processes $\widehat\eta, \widehat \theta$ and $\hat l\ge 0$ such that
    \begin{align*}
      \int \big(\eta^2, f(t,x,y,p, \theta)\big)\hat q(t, x, y)(d\eta, d\theta)  = \Big(\widehat\eta^2(t,x,y), f\big(t,x,y,p, \widehat \theta(t,x,y)\big) - \hat l(t,x,y)\Big).
    \end{align*}
  Further, it would be easy to verify that $p:=\widehat\dbP\circ (\ol W, \ol Y)^{-1} =\dbP^{\l, \widehat\eta}$ as well as  $J(\dbP^{\l, \widehat\eta}, \widehat \theta;\,p) \ge \hat J(\widehat\dbP;\,p)$.
  Therefore, we find a mean field game in sense of Theorem \ref{thm:mfg}.
\end{proof}

From the sketch of proof, we may observe the following.

\begin{corollary}  \label{coro:wintegrable}
  Under Assumption \ref{assum:c} and \ref{assum:U}, there exists a mean field game such that 
   \begin{align*}
     \dbE^{\dbP^{\widehat\l,\widehat\eta}}\left[\int_0^T |\widehat \theta_u|^q du \right] <\infty.
   \end{align*}
\end{corollary}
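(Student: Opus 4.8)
The plan is to read the required integrability directly off the construction carried out in the proof of Theorem~\ref{thm:mfg}. The $q$-moment condition $\dbE^{\widehat\dbP}\big[\int_0^T|\L_t|^q\,dt\big]<\infty$, imposed in the definition \eqref{eq:Lintegrable} of $\cR(p)$, will be propagated through the conditional expectation $\widehat q_t=\dbE^{\widehat\dbP}[\L_t\,|\,\ol\cF_t]$ of \eqref{eq:qhat} and the measurable selection down to a bound on $\widehat w$. The starting observation is that the selected drift control is a barycenter: since the $\ol Y$-drift in the generator $L^{p,\eta}$ is affine in $w$ (with $w$-coefficient $-1$), consistency of the $\ol\dbF$-projected martingale problem with the dynamics of $\dbP^{\widehat\l,\widehat\eta}$ forces
\be
  \widehat w_t=\int_{\Sigma\times\dbR} w\,\widehat q_t(d\eta,dw),\q \widehat\dbP\text{-a.s.}
\ee
This is compatible with the reward part of the selection because $f(t,x,y,p,\cdot)=\alpha^*_t\beta^*_t\big(x_t-U(\cdot)\big)$ is concave in $w$ (a nonnegative multiple of the concave map $x_t-U(\cdot)$, using $\alpha^*_t\beta^*_t\ge 0$ and the convexity of $U$), so Jensen's inequality gives $f(t,x,y,p,\widehat w_t)\ge\int f\,\widehat q_t$, i.e.\ the slack $\hat l_t\ge 0$ of the selection equation.

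From here the remaining estimates are routine. Convexity of $w\mapsto|w|^q$ together with the barycenter form give the pointwise bound $|\widehat w_t|^q\le\int|w|^q\,\widehat q_t(d\eta,dw)$, while the definition of $\widehat q_t$ as a conditional expectation of $\L_t$ yields, for the linear functional $\mu\mapsto\int|w|^q\,\mu$,
\be
  \dbE^{\widehat\dbP}\!\left[\int_0^T\!\!\int|w|^q\,\widehat q_t(d\eta,dw)\,dt\right]=\dbE^{\widehat\dbP}\!\left[\int_0^T\!\!\int|w|^q\,\L_t(d\eta,dw)\,dt\right]\le\dbE^{\widehat\dbP}\!\left[\int_0^T|\L_t|^q\,dt\right]<\infty,
\ee
where the middle inequality uses $|w|^q\le|(\eta,w)|^q$. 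Finally, since the constructed game satisfies $p=\dbP^{\widehat\l,\widehat\eta}=\widehat\dbP\circ(\ol X,\ol Y)^{-1}$ and $\widehat w$ is an $\ol\dbF$-adapted functional of $(\ol X,\ol Y)$, the expectation of $\int_0^T|\widehat w_u|^q\,du$ is the same under $\dbP^{\widehat\l,\widehat\eta}$ and under $\widehat\dbP$; combining the last two displays yields $\dbE^{\dbP^{\widehat\l,\widehat\eta}}\big[\int_0^T|\widehat w_u|^q\,du\big]<\infty$, as claimed.

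The step I expect to require the most care is pinning down the barycenter form of $\widehat w$, equivalently the bound $U(\widehat w_t)\le\int U(w)\,\widehat q_t$ uniformly in $t$. Trying to obtain it by dividing the reward inequality $f(\widehat w_t)\ge\int f\,\widehat q_t$ by $\alpha^*_t\beta^*_t$ is illegitimate precisely on the set $\{\alpha^*_t=0\}$, which is genuinely nonempty since $a^*$ vanishes below a threshold; one must instead invoke the affineness of the $\ol Y$-drift in $w$ together with the convexity of $U$, which delivers the barycenter representation and the bound simultaneously on all of $[0,T]\times\ol\Om$ and thereby sidesteps the degeneracy. A secondary, purely technical point is the rigorous treatment of the conditional expectation of the measure-valued process $\L$; this is standard once one works through the action of $\widehat q_t$ on bounded continuous test functions and extends to $|w|^q$ by monotone approximation, the required uniform integrability being exactly the $q$-moment condition built into $\cR(p)$.
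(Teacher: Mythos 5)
Your proof is correct and takes essentially the same route as the paper: both read the integrability off the relaxed construction, passing from $\widehat w$ to $\widehat q_t=\dbE^{\widehat\dbP}[\L_t\,|\,\ol\cF_t]$ and then invoking the $q$-moment condition built into the definition of $\cR(p)$. The only real difference is that the paper channels the convexity step through $U$, via $C(|\widehat w_t|^q-1)\le U(\widehat w_t)\le\int U(w)\,\widehat q_t(dw)$ and the growth bounds of Assumption \ref{assum:U}, whereas you apply Jensen to $|\cdot|^q$ directly from the barycenter identity $\widehat w_t=\int w\,\widehat q_t(d\eta,dw)$; your observation that this identity (forced by the affine-in-$w$ drift and the requirement $p=\dbP^{\widehat\l,\widehat\eta}$), rather than division of the reward inequality by $\alpha^*_t\beta^*_t$, is what legitimizes the key inequality on the degenerate set $\{\alpha^*_t=0\}$ is a point the paper leaves implicit.
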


\begin{proof}
 Note that in the proof of Theorem \ref{thm:mfg}, the $\widehat \theta$ we constructed satisfies
   \begin{align*}
     C(|\widehat \theta_t|^q -1) \le U(\widehat \theta_t) \le \int_{\dbR} U(\theta) \widehat q_t(d\theta),
   \end{align*}
  where $\widehat q$ is defined in \eqref{eq:qhat}. 
 Therefore, 
   \begin{align*}
     C\dbE^{\dbP^{\widehat\l,\widehat\eta}}\left[\int_0^T |\widehat \theta_u|^q du \right] 
      & \le C+ \dbE^{\widehat\dbP}\left[ \int_0^T \int_\dbR U(\theta) \L_u(d\theta) du\right]  \\
      & \le C+  \dbE^{\widehat\dbP}\left[ \int_0^T \int_\dbR C'(|\theta|^q+1)\L_u(d\theta) du\right].
   \end{align*}
 Then, the desired result follows from \eqref{eq:Lintegrable}.
\end{proof}

\subsection{Mean field game: approximation} \label{sec:approximation}

In this section, we shall justify our mean field formulation, that is, answer the question why the principals would apply the mean field game we studied in the previous section. Unlike the classical cases, recall that in our $n$-player problem the stochastic control problem is time-inconsistent. Therefore, it would be difficult to verify whether a contract provided by the mean field game would be $\e$-optimal for the $n$-player game. Instead, we shall verify that using a contract provided by the mean field game, the principals would receive, in a $n$-player game,  a utility $\e$-close to the value function calculated in the mean field game. 

\subsubsection{Agent problem: backward propagation of chaos} \label{sec:backwardchaos}

Here we shall analyse the agent's behaviour, once he is given $n$ contracts provided by the mean field game. 
Note that the mean field contract has the form: 
 \begin{align} \label{eq:MFBSDE}
   \xi = \ol Y_0 - \int_0^T\left(\int_{\dbR} c^*(y-\ol Y_u)p_u^*(dy) +\theta_u \right)du + \int_0^T \eta_u d \ol W_u. 
 \end{align}
In this section, we shall assume that $\xi$ is $\cF^{\ol W, \ol Y_0}_T$-measurable and $\theta$ is $\cF_t^{\ol W, \ol Y_0}$-adapted. 
In this case, we may write $\xi(\ol W, \ol Y_0)$ and $\theta(\ol W, \ol Y_0)$.  
Now, recall the canonical space $\Om$ in the setting of $n$-principal problem, and the canonical process $W=\{W^i\}_{i\in \dbI_n}$ representing the outputs of all principals. 
Further, let $\{(Y^{*,i}, \xi^i, \theta^i, \eta^i)\}_{i\in \dbI_n}$ be the $n$ independent copies of $(\ol Y, \xi, \theta, \eta)$
such that
 \begin{align}  \label{eq:MFBSDEi}
    Y_t^{*,i} &= \xi^{i} + \int_t^T\left(\int_{\dbR} c^*(y-Y^{*,i}_u)p_u^*(dy) + \theta^i_u \right)du - \int_t^T \eta^i_u d  W^i_u \notag \\
              &= \xi^{i} + \int_t^T\left(\int_{\dbR} c^*(y-Y^{*,i}_u)p_u^*(dy) + \theta^i_u \right)du - \int_t^TZ_u^{*,i}\cd dW_u,
 \end{align}
  with ${Z}^{*,i}:=  e_i \eta^i = \big(0,\cdots,0, \eta^i, 0, \cdots, 0\big)^\top$. 

In this section, we shall simply use the notation $\dbE$ instead of $\dbE^{\dbP^{\widehat\l,\widehat\eta}}$. The following estimate follows directly from Corollary \ref{coro:wintegrable}.

\begin{lemma}\label{lem:estimateYstar}
Assume $q \ge  2$ in Assumption \ref{assum:U}. Then we have 
  \begin{align*}
      \dbE\left[\sup_{0\leq t\leq T}\big|Y_t^{*,i}\big|^2\right] <\infty. 
  \end{align*}
In particular $\dbE[|\xi^i|^2]<\infty$.
\end{lemma}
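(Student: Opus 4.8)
The plan is to treat \eqref{eq:MFBSDEi} as a (McKean--Vlasov) forward It\^o equation for $Y^{*,i}$ and to derive the $L^2$ bound by a standard localisation/Gr\"onwall argument. First I would record the available integrability: the initial value $Y_0^{*,i}$ is a copy of $\ol Y_0\sim\widehat\l\in\cI$, hence a.s.\ bounded since $\widehat\l$ has compact support in $[R,\infty)$; the volatility $\sqrt{\eta^i}$ is bounded because $\eta^i$ takes values in the compact set $\Sigma\subset\dbR^+$; and, by Corollary \ref{coro:wintegrable} together with $q\ge2$, one has $\dbE\int_0^T|w^i_u|^2\,du<\infty$. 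I would also exploit a convenient cancellation: writing $dX^i_u=du+dW^i_u$ with $W^i$ a $\dbP^{\widehat\l,\widehat\eta}$-Brownian motion, the $\sqrt{\eta^i}\,du$ contribution coming from $\sqrt{\eta^i}\,dX^i_u$ exactly cancels the $\sqrt{\eta^i}$ in the drift of \eqref{eq:MFBSDEi}, so that $Y^{*,i}$ solves
$$ dY^{*,i}_t=-\Big(\int_\dbR c^*\big(y-Y^{*,i}_t\big)p^*_t(dy)+w^i_t\Big)dt+\sqrt{\eta^i_t}\,dW^i_t . $$

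Next I would apply It\^o's formula to $\big|Y^{*,i}_t\big|^2$, stopped at $\tau_N:=\inf\{t:|Y^{*,i}_t|\ge N\}\wedge T$ so that the stochastic integral is a genuine martingale. Using that $c^*$ is Lipschitz (Assumption \ref{assum:c}) with constant $L$, the mean-field drift is bounded by $|c^*(0)|+L\int_\dbR|y|\,p^*_t(dy)+L\,|Y^{*,i}_t|$, while $\sqrt{\eta^i}$ is bounded and $w^i$ is square-integrable. After taking expectations, applying Young's inequality to the cross terms and the Burkholder--Davis--Gundy inequality to the martingale part, I expect to reach an inequality of the form
$$ \dbE\Big[\sup_{s\le t}\big|Y^{*,i}_{s\wedge\tau_N}\big|^2\Big]\le C\Big(1+\dbE\int_0^T|w^i_u|^2\,du+\int_0^T\!\!\int_\dbR|y|^2p^*_s(dy)\,ds\Big)+C\int_0^t\dbE\Big[\sup_{r\le s}\big|Y^{*,i}_{r\wedge\tau_N}\big|^2\Big]ds . $$
Gr\"onwall's lemma then bounds the left-hand side uniformly in $N$, and Fatou's lemma (letting $N\to\infty$ and using continuity of $Y^{*,i}$) yields $\dbE\big[\sup_{t\le T}|Y^{*,i}_t|^2\big]<\infty$. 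Evaluating at $t=T$ gives $\dbE\big[|\xi^i|^2\big]=\dbE\big[|Y^{*,i}_T|^2\big]<\infty$.

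The one point that is not purely routine is the self-referential mean-field term: since $p^*_s=\cL\big(Y^{*,i}_s\big)$, the quantity $\int_\dbR|y|^2p^*_s(dy)=\dbE\big|Y^{*,i}_s\big|^2$ is exactly (a version of) the moment we are trying to bound, so the Gr\"onwall step does not close unless the equilibrium flow already carries a finite second moment. The main work is therefore to justify $\sup_{s\le T}\int_\dbR|y|^2p^*_s(dy)<\infty$; once this is available the mean-field drift becomes a bounded deterministic input and the estimate reduces to the classical $L^2$ a priori bound for an It\^o process with linear-growth, Lipschitz coefficients. I would obtain this moment control either directly from the construction of the equilibrium in the previous subsection --- the integrability constraint $\dbE^{\widehat\dbP}\int_0^T|\L_t|^q\,dt<\infty$ built into $\cR(p)$, combined with the compact support of $\widehat\l$ and compactness of $\Sigma$ --- or by running the localised It\^o/Gr\"onwall estimate simultaneously for the first and second moments and closing it using $p^*_s=\cL\big(Y^{*,i}_s\big)$ together with Fatou. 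This a priori moment bound on $p^*$ is the crux of the argument; everything else is standard.
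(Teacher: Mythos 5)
Your overall strategy is correct and is essentially the argument the paper intends: the paper offers no written proof of this lemma beyond the remark that it ``follows directly from Corollary \ref{coro:wintegrable}'', so what is implicitly meant is exactly your computation --- rewrite \eqref{eq:MFBSDEi} as a forward It\^o equation (your observation that the $\sqrt{\eta^i}\,du$ terms cancel, leaving drift $-\int c^*(y-Y^{*,i}_t)p^*_t(dy)-w^i_t$ and diffusion $\sqrt{\eta^i_t}\,dW^i_t$, is right), then use the compactly supported initial law $\widehat\l$, the compactness of $\Sigma$, the $\dbL^2$ bound on $w^i$ obtained from Corollary \ref{coro:wintegrable} via $q\ge2$, the Lipschitz continuity of $c^*$, and a localised Gr\"onwall/BDG/Fatou argument. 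You also correctly isolate the only delicate point, namely that $\int|y|^2p^*_s(dy)=\dbE|Y^{*,i}_s|^2$ is the very quantity being estimated. On that point, your route (a) is the right resolution: the a priori moment bound on the equilibrium law comes from the construction of $\widehat\dbP$ in the relaxed martingale problem (tightness of $\cR(p)$ built on the compact support of $\l$, compact $\Sigma$, and the integrability constraint \eqref{eq:Lintegrable}), after which the mean-field term is a deterministic bounded input and the estimate is classical. Your route (b), by contrast, does not close as stated: the mean-field contribution enters the drift as the \emph{unlocalised} expectation $\dbE|Y^{*,i}_s|$, which cannot be dominated by the stopped quantity $\dbE|Y^{*,i}_{s\wedge\tau_N}|$, so localising the process alone does not break the circularity. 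You should therefore present (a) as the argument and drop (b), but this is a presentational fix rather than a gap in the proof you actually rely on.
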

   
Given such contracts $\{(\xi^i, \theta^i)\}_{i\in \dbI_n}$, the agent would solve the system of BSDE \eqref{eq:BSDEnagent}, namely, 
  \begin{align}  \label{eq:Yni}
     Y_t^{n,i} &= \xi^{i} + \int_t^T\Bigg(\tfrac{1}{n-1}\sum_{\substack{j=1\\j\neq i}}^nc^*(Y^{n,j}_u-Y^{n,i}_u) + \theta^i_u\Bigg)du - \int_t^T Z^{n,i}_u\cdot dW_u \notag \\
               &= \xi^{i} + \int_t^T\bigg(\tfrac{n}{n-1}\int_{\dbR}c^*(y-Y^{n,i}_u)p^n_u(dy) - \tfrac{c^*(0)}{n-1} + \theta^i_u\bigg)du - \int_t^T Z^{n,i}_u\cdot dW_u, 
  \end{align}
 where $p^{n}$ is the empirical measure $ \frac{1}{n}\sum_{j=1}^n \d_{Y^{n,j}}$ and $p^n_u:= \frac{1}{n}\sum_{j=1}^n \d_{Y^{n,j}_u}$. 
Define $\Delta Y^i:=Y^{n,i}-Y^{*,i}$, $\Delta Z^i:=Z^{n,i}-Z^{*,i}$. 
Let $\cC:=C([0,T];\dbR)$.
Denote the square of the Wasserstein-$2$ distance on $\cP^2(\cC)$ by
  \begin{align*}
     d^2_t(\mu,\nu) :=\inf_{\pi\in\Pi(\mu,\nu)}\int_{\cC\times\cC}\sup_{t\leq u\leq T}|x_u-y_u|^2\pi(dx,dy). 
  \end{align*}
Here is the main result concerning the agent's problem.

\begin{proposition} \label{prop:backpropa}
Assume that Assumption \ref{assum:c} and \ref{assum:U} hold true for some $q \ge 2$. 
Then we have
  \begin{align}  \label{eq:backpropa}
    \lim_{n\to\infty}\dbE\left[d^2_0\left(p^n,p^*\right)\right] = 0. 
  \end{align}
\end{proposition}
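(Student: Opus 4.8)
The plan is to prove the convergence of the empirical measures by the standard triangle-inequality decomposition underlying propagation of chaos, combined with a stability estimate for the coupled backward system. I would introduce the empirical measure of the \emph{decoupled} i.i.d.\ copies, $p^{*,n} := \frac1n\sum_{j=1}^n \delta_{Y^{*,j}} \in \cP(\cC)$, and split
\[
  \dbE\big[d_0^2(p^n, p^*)\big] \le 2\,\dbE\big[d_0^2(p^n, p^{*,n})\big] + 2\,\dbE\big[d_0^2(p^{*,n}, p^*)\big].
\]
Since $p^n$ and $p^{*,n}$ are built on the same index set, the diagonal coupling $\pi = \frac1n\sum_j \delta_{(Y^{n,j}, Y^{*,j})}$ is admissible, so
\[
  d_0^2(p^n, p^{*,n}) \le \frac1n\sum_{j=1}^n \sup_{0\le u\le T}\big|Y^{n,j}_u - Y^{*,j}_u\big|^2 = \frac1n\sum_{j=1}^n \big\|\Delta Y^j\big\|_T^2 .
\]
By exchangeability of the indices it then suffices to show $\dbE[\|\Delta Y^i\|_T^2]\to 0$ for a single $i$, together with $\dbE[d_0^2(p^{*,n}, p^*)]\to 0$. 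The latter is handled directly by the law of large numbers for empirical measures: the paths $Y^{*,j}$ are i.i.d.\ with common law $p^*$ on $\cC$, and by Lemma \ref{lem:estimateYstar} (using $q\ge 2$) they are square-integrable, $\dbE[\|Y^{*,i}\|_T^2]<\infty$, so the Wasserstein convergence of empirical measures of i.i.d.\ square-integrable $\cC$-valued variables applies.

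For the remaining term I would run a BSDE stability estimate on $\Delta Y^i = Y^{n,i} - Y^{*,i}$. Subtracting \eqref{eq:MFBSDEi} from \eqref{eq:Yni}, the terminal condition vanishes ($\Delta Y^i_T = \xi^i - \xi^i = 0$) and, using the identity $\frac{n}{n-1}\int c^*(y-Y^{n,i}_u)p^n_u(dy) - \frac{c^*(0)}{n-1} = \frac1{n-1}\sum_{j\neq i} c^*(Y^{n,j}_u - Y^{n,i}_u)$, the driver difference equals
\[
  \frac1{n-1}\sum_{j\neq i} c^*\big(Y^{n,j}_u - Y^{n,i}_u\big) - \int c^*\big(y-Y^{*,i}_u\big)p^*_u(dy) + e_i\Delta Z^i_u .
\]
I split the first two terms as $(I)_u+(II)_u$, where $(I)_u := \frac1{n-1}\sum_{j\neq i}\big[c^*(Y^{n,j}_u - Y^{n,i}_u) - c^*(Y^{*,j}_u - Y^{*,i}_u)\big]$ and $(II)_u := \frac1{n-1}\sum_{j\neq i} c^*(Y^{*,j}_u - Y^{*,i}_u) - \int c^*(y-Y^{*,i}_u)p^*_u(dy)$. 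By the Lipschitz property of $c^*$ (Assumption \ref{assum:c}) one has $|(I)_u| \le L|\Delta Y^i_u| + \frac{L}{n-1}\sum_{j\neq i}|\Delta Y^j_u|$, while $(II)_u$ is an exogenous fluctuation. Applying It\^o's formula to $|\Delta Y^i_t|^2$, taking expectations, and using Young's inequality to absorb the $e_i\Delta Z^i_u$ and $|\Delta Y^i_u|\,|\Delta Z^i_u|$ contributions into the $\dbE\!\int|\Delta Z^i_u|^2\,du$ on the left, I then \emph{sum over $i$ and divide by $n$}; the key algebraic fact is that $\frac1n\sum_i\frac1{n-1}\sum_{j\neq i}|\Delta Y^j_u|^2$ collapses back to $\frac1n\sum_i|\Delta Y^i_u|^2$. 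Writing $\phi(t):=\frac1n\sum_i\dbE[|\Delta Y^i_t|^2]$, this yields $\phi(t)\le C\int_t^T\phi(u)\,du + C\,\varepsilon_n$ with $\varepsilon_n := \dbE\big[\int_0^T\frac1n\sum_i|(II)^i_u|^2\,du\big]$; Gronwall gives $\sup_t\phi(t)\le C'\varepsilon_n$, and a Burkholder--Davis--Gundy step upgrades this to $\dbE[\|\Delta Y^i\|_T^2]\le C''\varepsilon_n$.

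It remains to show $\varepsilon_n\to 0$, which follows from a conditional law of large numbers. Conditionally on $Y^{*,i}_u$, the summands $c^*(Y^{*,j}_u - Y^{*,i}_u)$, $j\neq i$, are i.i.d.\ with conditional mean $\int c^*(y-Y^{*,i}_u)p^*_u(dy)$, so $(II)^i_u$ is centered and $\dbE[|(II)^i_u|^2]=\frac1{n-1}\dbE\big[\Var\big(c^*(Y^{*,j}_u-Y^{*,i}_u)\mid Y^{*,i}_u\big)\big]$. Since $c^*$ is Lipschitz, $|c^*(Y^{*,j}_u-Y^{*,i}_u)|^2 \le C(1+|Y^{*,j}_u|^2+|Y^{*,i}_u|^2)$, whose expectation is bounded by Lemma \ref{lem:estimateYstar}; integrating in $u$ gives $\varepsilon_n = O(1/n)\to 0$, completing the argument. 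The main obstacle is the stability step: because the system \eqref{eq:Yni} is coupled through the empirical measure, the single-index estimate for $\Delta Y^i$ feeds back on every other $\Delta Y^j$, so no Gronwall argument closes on one index alone; it is precisely the symmetrization (sum over $i$, divide by $n$) that decouples the estimate, and doing so while simultaneously controlling the $\Delta Z^i$ terms in the \emph{backward} direction of the equation is the delicate part that justifies the name ``backward propagation of chaos''.
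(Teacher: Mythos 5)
Your proposal is correct, and its skeleton --- the diagonal coupling of $p^n$ with the i.i.d.\ empirical measure $p^{*,n}=\frac1n\sum_j\delta_{Y^{*,j}}$, the triangle inequality, and the law of large numbers for $\dbE[d_0^2(p^{*,n},p^*)]$ --- matches the paper's. Where you genuinely diverge is in how $\dbE[d_0^2(p^n,p^{*,n})]$ is controlled. The paper first proves a stability lemma bounding $\dbE[\sup_{t\le s\le T}|\Delta Y^i_s|^2]$ by $C\,\dbE[\int_t^T d_u^2(p^n,p^*)\,du]$ (via Kantorovich duality and the Lipschitz continuity of $c^*$), and then runs a second, measure-level Gr\"onwall on $t\mapsto\dbE[d_t^2(p^n,p^*)]$ with $\dbE[d_t^2(p^{*,n},p^*)]$ as the forcing term, so that all error is funneled through the Wasserstein law of large numbers. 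You instead symmetrize the single-index It\^o estimates (sum over $i$, divide by $n$) and control the resulting fluctuation $(II)$ by a conditional variance computation that exploits the linearity of the driver in the measure argument; this yields the explicit rate $\dbE[\|\Delta Y^i\|_T^2]=O(1/n)$, which is sharper than what the paper's route gives for that quantity (their bound is only as good as $\dbE[d_0^2(p^{*,n},p^*)]$, which on path space carries no comparable rate). The trade-off is that the paper's argument uses only Lipschitz dependence of the driver on the measure, so it extends verbatim to the general drivers $F(t,Y,Z,p)$ they mention in their remark, whereas your $O(1/n)$ variance bound is tied to the specific form $\int c^*(y-\cdot)\,p(dy)$; and in both arguments the final rate in \eqref{eq:backpropa} is still dominated by the LLN term, so neither wins overall. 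One small caveat: your closing claim that no Gr\"onwall argument closes on a single index is slightly overstated --- the paper does close one on a single index, simply using $d_u^2(p^n,p^*)$ rather than the symmetrized sum $\frac1n\sum_j|\Delta Y^j_u|^2$ as the coupling quantity.
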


\begin{remark}
\begin{itemize}
  \item There is a certain similarity between the previous result and the one in Buckdahn, Djehiche, Li, and Peng \cite{BDLP09} where the authors also study the convergence from a $n$-player BSDE to the mean field limit equation in the form of \eqref{eq:MFBSDEi}. 
        To our understanding of their paper, the solution to the $n$-player BSDE there is a fixed point of the following map:
          \begin{align*}
            p\mapsto Y(p) \mapsto \frac1n \sum_{i=1}^n \d_{Y^i(p)},
          \end{align*}
          where $Y(p)$ is the solution of the BSDE
          \begin{align}  \label{BSDEpeng}
            Y_t = \xi + \int_t^T\left(\int_{\dbR} c^*(y-Y_u)p_u(dy) + \theta_u \right)du - \int_t^T Z_ud\ol W_u,
          \end{align}
          and $\{Y^i(p)\}_{i\in \dbI_n}$ are the independent copies of $Y(p)$. 
        In other words, their formulation of the $n$-player problem is via the open loop while ours is via the closed loop. 

  \item As we will show, the technique involved to prove Proposition \ref{prop:backpropa} is a combination of the BSDE estimates and the argument for proving the propagation of chaos. 
        That is why we would name this section the backward propagation of chaos. When we revise the paper, we note that this topic has already attracted some following research, see \cite{LaT19}. 

  \item Indeed the result of Proposition \ref{prop:backpropa} and the upcoming analysis hold valid for the broader class of BSDE systems in the form of
         \begin{align*}
            Y^{n,i}_t = \xi^i +\int_t^T F\big(t, Y_t^{n,i}, Z_t^{n,i},  p^n_t\big)dt -Z^{n,i}_t dW_t,\q i\in \dbI_n,
         \end{align*}
         where $F$ is Lipschitz continuous in $(y,z,p)$. 
        Apparently,  this result could have an independent interest, once one wants to look into the asymptotic behaviour of solutions to such BSDE systems.
\end{itemize}
\end{remark}

Before proving this main result, we first obtain the following estimates through some classical BSDE arguments. 

\begin{lemma}
We have 
   \begin{align}
     \dbE\left[\sup_{t\leq s\leq T}|\Delta Y^i_s|^2\right] &\leq C\left(\dbE\left[\int_t^Td^2_u(p^{n},p^*)du\right]+ \frac1{n^2}\right), \label{eq:supDeltaY<Cd2} \\
     \dbE\left[\int_t^T|\Delta Z^i_u|^2du\right]  &\leq  C \left(\dbE\left[\int_t^Td^2_u(p^{n},p^*)du\right] + \frac{1}{n^2}\right). \label{eq:finalestimateZ}
   \end{align}
\end{lemma}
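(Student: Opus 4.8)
The plan is to subtract the two backward equations and run a standard $L^2$ a priori estimate on the resulting linear BSDE for $(\Delta Y^i,\Delta Z^i)$, in which the mean-field coupling enters only through a term Lipschitz in $\Delta Y^i$, a Wasserstein source term, and a normalization remainder of order $1/n$. First I would subtract \eqref{eq:Yni} from \eqref{eq:MFBSDEi}: the terminal data $\xi^i$, the wage $w^i$ and the structure of the output-drift term ($e_i Z^{n,i}$ against $e_i Z^{*,i}$) are common to both, so with $\Delta Y^i_T=0$ one obtains
\[
\Delta Y^i_t=\int_t^T\big(\Phi^i_u+e_i\Delta Z^i_u\big)\,du-\int_t^T\Delta Z^i_u\cdot dX_u,
\]
where $\Phi^i_u:=\tfrac{n}{n-1}\int c^*(y-Y^{n,i}_u)p^n_u(dy)-\tfrac{c^*(0)}{n-1}-\int c^*(y-Y^{*,i}_u)p^*_u(dy)$.

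Next I would decompose $\Phi^i$. Writing $\tfrac{n}{n-1}\int c^*\,dp^n-\tfrac{c^*(0)}{n-1}=\int c^*\,dp^n+\tfrac1{n-1}\big(\int c^*(y-Y^{n,i}_u)p^n_u(dy)-c^*(0)\big)$ and inserting $\int c^*(y-Y^{*,i}_u)p^n_u(dy)$, I split $\Phi^i$ into: (i) $\int\big[c^*(y-Y^{n,i}_u)-c^*(y-Y^{*,i}_u)\big]p^n_u(dy)$, bounded by $L|\Delta Y^i_u|$ since $c^*$ is Lipschitz with constant $L$ (Assumption \ref{assum:c}); (ii) $\int c^*(y-Y^{*,i}_u)(p^n_u-p^*_u)(dy)$, which, the integrand being $L$-Lipschitz in $y$, is bounded by $L\,W_2(p^n_u,p^*_u)\le L\,d_u(p^n,p^*)$ (the marginal Wasserstein distance is dominated by the path-space one); and (iii) the normalization remainder, bounded by $\tfrac{L}{n(n-1)}\sum_j|Y^{n,j}_u-Y^{n,i}_u|$, whose $L^2(\dbE)$-norm is $O(1/n)$ by the uniform moment bound of Lemma \ref{lem:estimateYstar} together with its analogue for $Y^{n,i}$ (same BSDE estimate). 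Hence $|\Phi^i_u|\le L|\Delta Y^i_u|+L\,d_u(p^n,p^*)+R^i_u$ with $\dbE\big[(\int_t^T R^i_u\,du)^2\big]\le C/n^2$.

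Then I would apply It\^o's formula to $|\Delta Y^i|^2$ and take $\dbE$, the bracket of the $dX$-integral yielding $\int_t^T|\Delta Z^i_u|^2\,du$. Young's inequality on $2\Delta Y^i\Phi^i$ and on the $\Delta Z^i$-linear drift term, together with a backward Gronwall argument that absorbs the $L|\Delta Y^i_u|$ contribution, gives \eqref{eq:supDeltaY<Cd2}; keeping instead the $\int_t^T|\Delta Z^i_u|^2\,du$ term on the left and substituting the bound just obtained for $\dbE[\sup|\Delta Y^i|^2]$ gives \eqref{eq:finalestimateZ}, the explicit $1/n^2$ being precisely the square of the order-$1/n$ remainder $R^i$.

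The hard part will be keeping the constant $C$ independent of $n$. The integrator $X$ is $n$-dimensional and, under $\dbE=\dbE^{\dbP^{\widehat\l,\widehat\eta}}$, each coordinate carries a unit drift, whereas the driver of \eqref{eq:Yni} cancels only the drift of the $i$-th output through the $e_i\Delta Z^i$ term. Consequently the error dynamics acquire a drift contribution $(\mathbf 1-e_i)\cdot\Delta Z^i=\sum_{k\neq i}Z^{n,i,k}$ involving the off-diagonal volatilities: a crude Young inequality here costs a factor $|\mathbf 1-e_i|^2=n-1$, and removing the drift by a Girsanov change of measure costs a Radon--Nikodym density whose $L^2$-norm grows like $e^{cn}$, so neither shortcut preserves $n$-uniformity. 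The resolution must exploit that the mean-field interaction enters each $Y^{n,i}$ only at weight $1/n$, so that these cross sensitivities are genuinely of lower order; I would therefore estimate the term $\sum_{k\neq i}Z^{n,i,k}$ inside the same energy identity rather than by measure change, using its smallness to keep every constant dimension-free. Securing this uniform control is the crux, and it is exactly what the two displayed estimates encode.
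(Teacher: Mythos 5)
Your overall strategy coincides with the paper's: subtract \eqref{eq:MFBSDEi} from \eqref{eq:Yni}, split the driver difference into a Lipschitz-in-$\Delta Y^i$ part plus a Wasserstein term via Kantorovich duality plus an $O(1/n)$ normalization remainder, and then run the standard squared/It\^o estimates with BDG, Young and Gr\"onwall, the $1/n^2$ in \eqref{eq:finalestimateZ} coming from the squared remainder. Two points, however, separate your write-up from a complete proof. First, your bound on the remainder (iii) by $\tfrac{L}{n(n-1)}\sum_j|Y^{n,j}_u-Y^{n,i}_u|$ requires a second-moment bound on $\sup_t|Y^{n,i}_t|$ that is \emph{uniform in $n$}; you assert this as an ``analogue'' of Lemma \ref{lem:estimateYstar}, but $Y^{n,i}$ solves the fully coupled $n$-dimensional system, so this is an additional a priori estimate that must be stated and proved. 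The paper avoids needing it altogether by writing the remainder as $-\tfrac{c^*(0)}{n-1}+\tfrac{1}{n-1}\int c^*(y-Y^{*,i}_u)p^*_u(dy)$, i.e.\ by splitting off $\tfrac{n}{n-1}\bigl(\int c^*\,dp^n-\int c^*\,dp^*\bigr)$ so that only moments of the decoupled process $Y^{*,i}$ (Lemma \ref{lem:estimateYstar}) are ever used.

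Second, and more importantly, your final paragraph identifies ``the crux'' --- the residual drift $(\mathbf 1-e_i)\cdot\Delta Z^i$ coming from the $n$-dimensional integrator --- and then does not resolve it: ``I would therefore estimate the term $\sum_{k\neq i}Z^{n,i,k}$ inside the same energy identity \ldots using its smallness'' is a statement of intent, and there is no a priori reason the off-diagonal components of $Z^{n,i}$ are small in a way that avoids the factor $n-1$ you correctly compute for a crude Young inequality. This is a genuine gap in your argument as written. For comparison, the paper does not spend any effort here: it keeps the combination $\int_t^\cdot\Delta Z^i_u\cdot(dX_u-e_i\,du)$ intact, treats it as the martingale part (note $Z^{*,i}=e_i\sqrt{\eta^i}$ has no off-diagonal components by construction), applies BDG to it directly in \eqref{eq:supDY1} and drops its expectation in the It\^o step \eqref{eq:estimatedeltaZ}. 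If you believe a residual drift survives under the working measure, that is a criticism of the paper's setup of the measure rather than a step you can leave to ``smallness''; either way, your proposal neither supplies the missing argument nor observes that the paper's accounting makes it unnecessary.
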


\begin{proof}
Comparing \eqref{eq:MFBSDEi} and \eqref{eq:Yni} we obtain
\begin{equation} \label{eq:deltaY<}
  \begin{aligned}
    |\Delta Y^i_t| &\leq \frac{n}{n-1}\int_t^T\bigg|\int_{\dbR}c^*(y-Y_u^{n,i})p_u^{n}(dy)-\int_{\dbR}c^*(y-Y_u^{*,i})p^*_u(dy)\bigg|du + \frac{|c^*(0)|}{n-1}T \\
                   & \qquad + \frac{1}{n-1}\int_t^T\int_{\dbR}c^*(y-Y_u^{*,i})p_u^*(dy)du  + \bigg|\int_t^T\Delta Z^i_u\cdot dW_u\bigg|.
  \end{aligned}
\end{equation}
Using the Kontorovich duality and Lipschitz continuity of $c^*$, we have  
\begin{align} \label{eq:triangleIneqMeasures}
   & \bigg|\int_{\dbR}c^*(y-Y_u^{n,i})p_u^{n}(dy)-\int_{\dbR}c^*(y-Y_u^{*,i})p^*_u(dy)\bigg| \nonumber \\
   & \quad \leq \bigg|\int_{\dbR}c^*(y-Y_u^{n,i})p_u^{n}(dy)-\int_{\dbR}c^*(y-Y_u^{n,i})p^*_u(dy)\bigg| \nonumber \\ 
   & \hspace{10mm} + \bigg|\int_{\dbR}c^*(y-Y_u^{n,i})p^*_u(dy)-\int_{\dbR}c^*(y-Y_u^*)p^*_u(dy)\bigg| \nonumber \\
   & \quad \leq \cW_1(p^{n}_u,p^*_u) + L|\Delta Y^i_u| \nonumber \\
   & \quad \leq \cW_2(p^{n}_u,p^*_u) + L|\Delta Y^i_u|. 
\end{align}
Squaring and taking supremum and expectation on both sides of \eqref{eq:deltaY<}, and using Jensen and BDG-inequality, we obtain 
 \begin{align} \label{eq:supDY1}
   \dbE\left[\sup_{t\leq s\leq T}|\Delta Y^i_s|^2\right] 
     &\leq C_{T}\dbE\left[\int_t^T\cW^2_2(p_u^{n},p_u^*)du\right] + C_{L,T}\dbE\left[\int_t^T|\Delta Y^i_u|^2du\right] + \frac{C_T}{n^2}|c^*(0)|^2  \nonumber \\
     &\qquad + C_{T}\dbE\left[\sup_{t\leq s\leq T}\left|\int_t^s\Delta Z^i_u\cdot dW_u\right|^2\right] \nonumber \\
     &\qquad + \frac{C_T}{n^2}\dbE\left[\int_t^T\left|\int_{\dbR}c^*(y-Y^{*,i})p_u^*(dy)\right|^2du\right]\nonumber \\
     &\leq C_{T}\dbE\left[\int_t^T\cW^2_2(p_u^{n},p_u^*)du\right] + C_{L,T}\dbE\left[\int_t^T|\Delta Y^i_u|^2du\right] + \frac{C_T}{n^2}|c^*(0)|^2  \\
     &\qquad + C_{T,BDG}\dbE\left[\int_t^T|\Delta Z^i_u|^2du\right] + \frac{C_T}{n^2}\dbE\left[\int_t^T\left|\int_{\dbR}c^*(y-Y^{*,i})p_u^*(dy)\right|^2du\right]. \nonumber
 \end{align}
Further we shall estimate $\dbE\left[\int_t^T|\Delta Z^i_u|^2du\right] $. 
 By It\^o's formula, 
  \begin{align*}
    |\Delta Y^i_t|^2 + \int_t^T|\Delta Z^i_u|^2du 
    &= \frac{2n}{n-1}\int_t^T\Delta Y^i_u\left(\int_{\dbR}c^*(y-Y_u^{n,i})p_u^{n}(dy)-\int_{\dbR}c^*(y-Y_u^{*,i})p^*_u(dy)\right)du \\
    & \qquad -\frac{2c^*(0)}{n-1}\int_t^T\Delta Y^i_udu - 2\int_t^T\Delta Y^i_u\Delta Z^i_u\cdot dW_u \\  
    & \qquad + \frac{2}{n-1}\int_t^T\Delta Y^i_u\int_{\dbR}c^*(y-Y^{*,i}_u)p^*_u(dy)du.
  \end{align*}
Together with \eqref{eq:triangleIneqMeasures} and Young's inequality, we obtain
 \begin{align} \label{eq:estimatedeltaZ} 
   \dbE\left[ \int_t^T|\Delta Z^i_u|^2du\right] 
     & \leq 2\dbE\left[\int_t^T|\Delta Y^i_u|\left(\cW_2(p_u^{n},p_u^*) + C_L|\Delta Y_u^i|\right)du\right] + \frac{c^*(0)^2}{(n-1)^2}T \notag \\
     & \qquad + 2\dbE\left[\int_t^T|\Delta Y^i_u|^2du\right] + \frac{1}{(n-1)^2} \dbE\left[\int_t^T\left|\int_{\dbR}c^*(y-Y^{*,i}_u)p^*_u(dy)\right|^2du\right] \notag \\
     & \leq (3+2C_L)\dbE\left[\int_t^T|\Delta Y^i_u|^2du\right] + \dbE\left[\int_t^T\cW^2_2(p_u^{n},p_u^*)du\right]  \notag \\
     & \qquad + \frac{c^*(0)^2}{(n-1)^2}T + \frac{1}{(n-1)^2} \dbE\left[\int_t^T\left|\int_{\dbR}c^*(y-Y^{*,i}_u)p^*_u(dy)\right|^2du\right].
 \end{align}
 We now estimate the last term above, 
  \begin{align*}
    \dbE\left[\int_t^T\left|\int_{\dbR}c^*(y-Y^{*,i}_u)p^*_u(dy)\right|^2du\right] 
       &\leq \dbE\left[\int_t^T\left(\int_\dbR\left(L|y| + L|Y^{*,i}_u| + |c^*(0)|\right)p_u^*(dy)\right)^2du\right] \\
       & \leq \dbE\left[\int_t^T\left(L\dbE[|Y^{*,i}_u|]+L|Y_u^{*,i}|+|c^*(0)|\right)^2du\right]  \\
       & \leq 6L^2T\dbE\left[\sup_{0\leq u\leq T}|Y^{*,i}_u|^2\right] + 3T|c^*(0)|^2, 
  \end{align*}
  which is bounded from above by a constant $C_0$, independent of $t$ and $i$, by the a priori estimate for  $Y^{*,i}$ in Lemma \ref{lem:estimateYstar}. 
  Together with \eqref{eq:supDY1} we obtain
  \begin{align*}
    \dbE\left[\sup_{t\leq s\leq T}|\Delta Y^i_s|^2\right] 
      &\leq (C_{T}+C_{T,BDG})\dbE\left[\int_t^T\cW^2_2(p_u^{n},p_u^*)du\right] \\ 
      &\qquad + \big(C_{L,T}+(2C_L+3)C_{T,BDG}\big)\int_t^T\dbE\bigg[\sup_{u\leq s\leq T}\big|\Delta Y^i_s\big|^2\bigg]du \\
      &\qquad + \frac1{n^2}(C_T + 4C_{T,BDG}T)c^*(0)^2 + \frac1{n^2}(C_T+4C_{T,BDG})C_0.
  \end{align*}  
 Applying Gr\"onwall inequality, we get 
  \begin{align*}
    \dbE\left[\sup_{t\leq s\leq T}|\Delta Y^i_s|^2\right] \leq C\left(\dbE\left[\int_t^T\cW^2_2(p_u^{n},p_u^*)du\right]+\frac1{n^2}\right) \leq C\left(\dbE\left[\int_t^Td^2_u(p^{n},p^*)du\right]+\frac1{n^2}\right),
  \end{align*}
  that is, the estimate \eqref{eq:supDeltaY<Cd2},  for some constant depending on $T$, $L$, $C_0$ and the constant from BDG inequality. Finally, the estimate \eqref{eq:finalestimateZ} follows from \eqref{eq:estimatedeltaZ} and \eqref{eq:supDeltaY<Cd2}. 
\end{proof}

\ms

\begin{proof}[Proof of Proposition \ref{prop:backpropa}]
	Define the empirical measure,
	   $$ \nu^n:= \frac{1}{n}\sum_{i=1}^n\delta_{Y^{*,i}}. $$ 
	The empirical measure $\frac{1}{n}\sum_{i=1}^n\delta_{(Y^{n,i},Y^{*,i})}$ is a coupling of the empirical measures $p^{n}$ and $\nu^n$,  so 
	   $$ d^2_t(p^{n},\nu^n) \leq \frac{1}{n}\sum_{i=1}^n\sup_{t\leq u\leq T}\big|\Delta Y^{i}_u\big|, \quad a.s. $$  
	Together with \eqref{eq:supDeltaY<Cd2}, we obtain that 
	   $$ \dbE\left[d^2_t(p^{n},\nu^n)\right] \leq C\left(\dbE\left[\int_t^Td_u^2(p^n,p^*)du\right]+\frac1{n^2}\right). $$
	Apply the triangle inequality and the previous inequality to obtain 
	  \begin{align*}
	   \dbE\big[d^2_t(p^n,p^*)\big] 
	     &\leq 2\dbE\big[d^2_t(p^n,\nu^n)\big] + 2\dbE\big[d_t^2(\nu^n,p^*)\big]  \\ 
	     &\leq 2C\dbE\left[\int_t^Td^2_u(p^n,p^*)du\right] + 2\dbE\big[d^2_t(\nu^n,p^*)\big]+\frac{2C}{n^2}. 
	  \end{align*}
	Using Gr\"onwall's inequality  we obtain 
	  \begin{align*}
	    \dbE\big[d^2_0(p^n,p^*)\big] \leq 2e^{2CT}\left(\dbE\big[d^2_0(\nu^n,p^*)\big]+\frac C{n^2}\right).
	  \end{align*}
	Each $\nu^n$ is the empirical measures of i.i.d.~samples from the law $p^*$, \eqref{eq:backpropa} follows from the law of large numbers. 
\end{proof}

\subsubsection{Principals' problem}

For technical reasons, in this section we would consider the case where $\theta$ takes values in a compact set in $\dbR$ and $U$ is convex and bounded on this set.
Recall the third point in Remark \ref{rem:mfg}, under the above setting, we still have existence of the mean field games.  

\begin{proposition}
Let all the $n$ principals offer the contract provided by a mean field game, for principal $i$ the reward becomes
  \begin{align} \label{eq:valuenplayer}
    V^{n,i}_0 &= \int_K\l(dy_0)\dbE^{\dbP^{\alpha^*}}\left[X^i_T - U(\xi^i) \mathbf{1}_{\{I_T=i\}} - \int_0^T U(\theta^i_u) \mathbf{1}_{\{I_u=i\} }du \right],
  \end{align}
  where $\a^*$ is the optimal intensity of the agent satisfying \eqref{eq:optimalintensity}. 
Then, as $n\rightarrow\infty$, $V^{n,i}_0$ converges to the value of the mean field game. 
\end{proposition}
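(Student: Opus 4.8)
The plan is to rewrite the $n$-player reward $V^{n,i}_0$ in the ``discounted'' form of the mean field objective \eqref{eq:mfprincipal1} and then pass to the limit, using the backward propagation of chaos (Proposition \ref{prop:backpropa}) to control the agent's processes and the boundedness of $U$ to justify interchanging limit and expectation. By symmetry of the i.i.d.\ contracts all $V^{n,i}_0$ coincide, so I fix a generic $i$; the regime $I_0\neq i$ contributes only the undrifted output (Remark \ref{rem:mf}), so I condition on $I_0=i$ and let $\t$ be the first departure time \eqref{eq:firstjump}. Writing the $n$-player optimal intensity \eqref{eq:optimalintensity} as
\[
  \a^{n,*}_u := \sum_{j\neq i}\a^{j,*}_u = \frac{n}{n-1}\int a^*\big(y-Y^{n,i}_u\big)p^n_u(dy) - \frac{a^*(0)}{n-1}\ge 0,\qquad \b^{n,*}_u := \exp\Big(-\int_0^u\a^{n,*}_s\,ds\Big)\in(0,1],
\]
and applying the Girsanov identities of Remark \ref{rem:jumpdensity} as in the derivation of \eqref{eq:mfprincipal1}, I would obtain
\[
  V^{n,i}_0 = \int_K\l(dy_0)\,\dbE^{\dbP^{\a^{n,*}}}\!\left[\int_0^T\a^{n,*}_u\b^{n,*}_u\big(X^i_u-U(w^i_u)\big)\,du + \b^{n,*}_T\big(X^i_T-U(\xi^i)\big)\right] + R_n,
\]
where $R_n$ collects the contributions of the paths on which the agent returns to principal $i$ after his first departure.

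Next I would show $R_n\to 0$. After leaving $i$ the agent sits at some $j\neq i$, from which the intensity of jumping back to $i$ is $\a^{i,*}=\frac{1}{n-1}a^*\big(Y^{n,i}-Y^{n,j}\big)=O(1/n)$; hence the expected Lebesgue measure of the set of times later spent again in regime $i$ is $O(1/n)$. Since $U$ is bounded throughout this section, $\xi^i\in\dbL^2$ by Lemma \ref{lem:estimateYstar}, $w^i$ is $\dbL^q$-integrable by Corollary \ref{coro:wintegrable}, and $X^i$ has finite moments, each piece of $R_n$ is bounded by a constant multiple of $1/n$, so $R_n\to 0$.

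Then I would pass to the limit in the main term. Proposition \ref{prop:backpropa} together with \eqref{eq:supDeltaY<Cd2} gives $\dbE\big[\sup_t|Y^{n,i}_t-Y^{*,i}_t|^2\big]\to0$ and $\dbE\big[d^2_0(p^n,p^*)\big]\to0$; since $a^*$ is Lipschitz (Assumption \ref{assum:U}) these yield $\dbE\big[\int_0^T|\a^{n,*}_u-\a^*_u|^2\,du\big]\to0$, with $\a^*_u=\int a^*(y-Y^{*,i}_u)p^*_u(dy)$, consistently with the heuristic \eqref{eq:convintensity}, and hence $\sup_u\dbE\big[|\b^{n,*}_u-\b^*_u|\big]\to0$. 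The integrands are dominated (recall that $\a^{n,*}$ is bounded since $a^*$ takes values in the compact $K$, that $0\le\b^{n,*}\le1$, and that $U$ is bounded) up to the integrable factors $X^i$, so dominated convergence lets me replace $(\a^{n,*},\b^{n,*},Y^{n,i},\xi^i,w^i)$ by their mean field limits $(\a^*,\b^*,\ol Y,\xi,w)$, giving
\[
  \lim_{n\to\infty}V^{n,i}_0 = \int_K\l(dy_0)\,\dbE^{\dbP^{\a^*}}\!\left[\int_0^T\a^*_u\b^*_u\big(\ol X_u-U(w_u)\big)\,du + \b^*_T\big(\ol X_T-U(\ol Y_T)\big)\right] = V^{P}_0(p^*),
\]
the mean field value \eqref{eq:mfprincipal1}, where the $y_0$-average corresponds to $\cL(\ol Y_0)=\l$.

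The hard part will be the rigorous handling of the $n$-dependent change of measure $\dbP^{\a^{n,*}}$: because the reference measure itself varies with $n$ through the jump intensity, the safe route is to push everything onto the canonical jump structure via Remark \ref{rem:jumpdensity}, so that $\a^{n,*}$ and $\b^{n,*}$ enter only as explicit integrands and convergence reduces to the $\dbL^2$ estimates already established. Controlling $R_n$ and ensuring the exponential discount $\b^{n,*}$ converges without blow-up---where the nonnegativity of $\a^{n,*}$ and the bound $0\le\b^{n,*}\le1$ are essential---are the two points demanding genuine care; the standing assumption that $U$ is bounded is precisely what renders both the dominated-convergence step and the estimate $R_n=O(1/n)$ routine.
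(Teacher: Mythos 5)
Your proposal is correct and follows essentially the same route as the paper: reduce $V^{n,i}_0$ to the discounted (first-departure) form plus a remainder from paths returning to principal $i$, kill that remainder using the $O(1/n)$ return intensity from \eqref{eq:optimalintensity} (the paper phrases this via the two-regime values $V^{n,i,0},V^{n,i,1}$ and $\dbP^{\a^*}[\ol\t_t\le T]\to 0$), and then pass to the limit in the main term via Proposition \ref{prop:backpropa}, the Lipschitz bound on $a^*$, and the boundedness of $U$. The only cosmetic difference is that the paper organizes the remainder estimate through the coupled system \eqref{eq:Vn0} rather than an explicit $R_n$, but the underlying argument is the same.
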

 
 \begin{proof}
The dynamic version of \eqref{eq:valuenplayer} reads
  \begin{equation*}
     V^{n,i}_t = \dbE_t^{\dbP^{\alpha^*}}\left[X^i_T - U(\xi^i) \mathbf{1}_{\{I_T=i\}}  -\int_t^T U(\theta^i_u) \mathbf{1}_{\{I_u=i\} }du \right], \q\mbox{for $0<t\le T$}.
  \end{equation*}
Since $U$ is bounded
  \begin{align} \label{eq:Vniintegrable}
    \dbE^{\dbP^{\a^*}}\bigg[\sup_{0\le t\le T}\big|V^{n,i}_t\big|^2\bigg] <\infty.
  \end{align}
We denote $V^{n,i,0}_t := V^{n,i}_t\big|_{I_t \neq i}$ and $V^{n,i,1}_t := V^{n,i}_t\big|_{I_t = i}$ on different regimes. 
 Recall $\t^n_t=\inf\{s\ge t: I_s \neq I_t\}$ and define $\ol \t^n_t :=\inf\{s\ge t: I_s=i\}$.
 By the tower property of the conditional expectation, we have
   \begin{align}  \label{eq:Vn0}
     \begin{cases}
       \displaystyle V^{n,i,1}_t =\dbE_t^{\dbP^{\a^*}}\left[V^{n,i,0}_{\t^n_t} \mathbf{1}_{\{\t^n_t \le T\}} + \big(X^i_T- U(\xi^i)\big)\mathbf{1}_{\{\t^n_t >T\}} -\int_t^{\t^n_t \we T} U(\theta^i_u) du\right]\\ 
        \vspace{-3mm} \\
        \displaystyle V^{n,i,0}_t = \dbE_t ^{\dbP^{\a^*}}\left[ V^{n,i,1}_{\ol\t^n_t} \mathbf{1}_{\{\ol\t^n_t\le T\}} +X^i_T\mathbf{1}_{\{\ol\t^n_t >T\}} \right] 
         = \dbE_t^{\dbP^{\a^*}}\left[\big(V^{n,i,1}_{\ol\t^n_t}-X^i_T\big)\mathbf{1}_{\{\ol\t^n_t \le T\}}\right] +X^i_t.
     \end{cases}
   \end{align}
It follows from \eqref{eq:optimalintensity} that $\lim\limits_{n\rightarrow\infty}\dbP^{\a^*}[\ol \t^n_t \le T] =0$. Together with \eqref{eq:Vniintegrable}, we obtain
   \begin{align}  \label{eq:Vbadregime}
   	  \lim_{n\rightarrow \infty} \dbE^{\dbP^{\a^*}}\Big[\big|V^{n,i,0}_t - X^i_t\big|^2\Big]=0.
   \end{align} 
Define
   \begin{align*}
      \widetilde V_0^{n,i,1}=\int_K\l(dy_0)\dbE^{\dbP^{\a^*}}\left[\int_0^T\beta^n_u \big( \alpha^n_u  W^i_u - U(\theta^i_u)+1\big) \mathrm{d}u + \beta^n_T \big(W^i_T- U(\xi^i) \big) \right],
   \end{align*}
 where $\a^n_t = \frac{1}{n-1} \sum_{j\neq i }a^*(Y^{n,j}_t-Y^{n,i}_t)$ and $\b^n_t := \exp\big(-\int_0^t \a^n_u du\big)$.
Note that $\a^*,\b^*$ are bounded. 
By \eqref{eq:Vbadregime}, we have
   \begin{align*}
       \lim_{n\rightarrow\infty}\big|\widetilde V_0^{n,i,1}- V_0^{n,i,1}\big| = 0.
   \end{align*}
Further, on the regime $I = i$, the expectation $\int_K\widehat\l(dy_0)\dbE^{\a^*}[\cd]$ coincides with $\dbE$(=$\dbE^{\widehat\l,\widehat\eta}$), so
   \begin{align*}
      \widetilde V_0^{n,i,1}
         =\dbE\left[\int_0^T  \beta^n_u \big( \alpha^n_u W^i_u - U(\theta^i_u) +1\big)du + \beta^n_T \big(W^i_T- U(\xi^i) \big) \right].
   \end{align*}
Since the function $a^*$ is bounded and Lipschitz continuous, we have
  \begin{align*}
     |\b^n_t-\b^*_t|^2 + |\a^n_t\b^n_t - \a^*_t\b^*_t|^2 \le C\Big(d^2_0(p^n, p^*) + \big|Y^{n,i}_t - Y^{*,i}_t\big|^2\Big).
  \end{align*}
Finally, the convergence result \eqref{eq:backpropa} and the estimate \eqref{eq:supDeltaY<Cd2} imply that
  \begin{align*}
    \lim_{n\rightarrow \infty}\widetilde V_0^{n,i,1}
      =\dbE\left[\int_0^T \beta^*_u \big( \alpha^*_u W^i_u - U(\theta^i_u)+1\big) du + \beta^*_T \big(W^i_T- U(Y^{*,i}_T) \big)\right],
  \end{align*}
and the latter is the value of the mean field game. 
\end{proof}

\bibliography{PAProblem} 

\begin{thebibliography}{10}

\bibitem{APT22}
R.~A{\"{\i}}d, D.~Possama{\"{\i}}, and N.~Touzi.
\newblock {Optimal electricity demand response contracting with responsiveness
  incentives}.
\newblock {\em to appear in Mathematics of Operations Research}, 2022.

\bibitem{AEEHP20}
C.~Alasseur, I.~Ekeland, R.~\'{E}lie, N.~Hern\'{a}ndez Santib\'{a}\~{n}ez, and
  D.~Possama\"{\i}.
\newblock An adverse selection approach to power pricing.
\newblock {\em SIAM J. Control Optim.}, 58(2):686--713, 2020.

\bibitem{AFS2020}
C.~Alasseur, H.~Farhat, and M.~Saguan.
\newblock A principal-agent approach to capacity remuneration mechanisms.
\newblock {\em Int. J. Theor. Appl. Finance}, 23(8):2050052, 64, 2020.

\bibitem{ACDL2022}
A.~Aurell, R.~Carmona, G.~Dayanikli, and M.~Lauri\`{e}re.
\newblock Optimal {I}ncentives to {M}itigate {E}pidemics: {A} {S}tackelberg
  {M}ean {F}ield {G}ame {A}pproach.
\newblock {\em SIAM Journal on Control and Optimization}, 60(2):S294--S322,
  2022.

\bibitem{BMMR2019}
B.~Baldacci, I.~Manziuk, T.~Mastrolia, and M.~Rosenbaum.
\newblock Market making and incentives design in the presence of a dark pool: a
  deep reinforcement learning approach, 2019.

\bibitem{BPR2021}
B.~Baldacci, D.~Possama\"{\i}, and M.~Rosenbaum.
\newblock Optimal make-take fees in a multi market-maker environment.
\newblock {\em SIAM J. Financial Math.}, 12(1):446--486, 2021.

\bibitem{Baron1985}
D.~Baron.
\newblock Noncooperative regulation of a nonlocalized externality.
\newblock {\em The RAND Journal of Economics}, 16(4):553--568, 1985.

\bibitem{BW1986}
B.~D. Bernheim and M.~D. Whinston.
\newblock Common agency.
\newblock {\em Econometrica}, 54:923--942, 1986.

\bibitem{BW1985}
B.~D. Bernheim and M.~D. Whinston.
\newblock Common marketing agency as a device for facilitating collusion.
\newblock {\em RAND J. Econ.}, 16:269--281, 1986.

\bibitem{BD05}
P.~Bolton and M.~Dewatripont.
\newblock {\em {Contract Theory}}.
\newblock MIT Press, 2005.

\bibitem{Bruno09}
B.~Bouchard.
\newblock {A stochastic target formulation for optimal switching problems in
  finite horizon}.
\newblock {\em Stochastics}, 81(2):171--197, 2009.

\bibitem{BS1982}
A.~Braverman and E.~Stiglitz.
\newblock Sharecropping and the interlinking of agrarian markets.
\newblock {\em Amer. Econ. Rev.}, 72:695--715, 1982.

\bibitem{BDLP09}
R.~Buckdahn, B.~Djehiche, J.~Li, and S.~Peng.
\newblock {Mean-Field Backward Stochastic Differential Equations: A Limit
  Approach}.
\newblock {\em The Annals of Probability}, 37(4):1524--1565, Jul. 2009.

\bibitem{CD1}
R.~Carmona and F.~Delarue.
\newblock {\em {Probabilistic Theory of Mean Field Games with Applications I}}.
\newblock Springer International Publishing, 2018.

\bibitem{CD2}
R.~Carmona and F.~Delarue.
\newblock {\em {Probabilistic Theory of Mean Field Games with Applications
  II}}.
\newblock Springer International Publishing, 2018.

\bibitem{CL15}
R.~Carmona and D.~Lacker.
\newblock {A probabilistic weak formulation of mean field games and
  applications}.
\newblock {\em Ann. Appl. Probab.}, 25(3):1189--1231, 2015.

\bibitem{CW20201}
R.~Carmona and P.~Wang.
\newblock {Finite-State Contract Theory with a Principal and a Field of
  Agents}.
\newblock {\em Management Science}, 67(8):4725--4741, 2021.

\bibitem{CEK11}
J.-F. Chassagneux, R.~Elie, and I.~Kharroubi.
\newblock {A note on existence and uniqueness for solutions of multidimensional
  reflected BSDEs}.
\newblock {\em Electron. Commun. Probab}, 16(120-128), 2011.

\bibitem{CPT17}
J.~Cvitani{\'c}, D.~Possama{\"{\i}}, and N.~Touzi.
\newblock {Moral hazard in dynamic risk management}.
\newblock {\em Management science}, 63(107):3328--3346, 2017.

\bibitem{CPT18}
J.~Cvitani{\'c}, D.~Possama{\"{\i}}, and N.~Touzi.
\newblock {Dynamic programming approach to principal-agent problems}.
\newblock {\em Finance Stoch.}, 22(1):1--37, 2018.

\bibitem{CX2018}
J.~Cvitani\'{c} and H.~Xing.
\newblock Asset pricing under optimal contracts.
\newblock {\em J. Econom. Theory}, 173:142--180, 2018.

\bibitem{CZ12}
J.~Cvitani{\'c} and J.~Zhang.
\newblock {\em {Contract theory in continuous-time models}}.
\newblock {Springer Finance}. Springer, Heidelberg, 2013.

\bibitem{DGH1997}
A.~Dixit, G.~M. Grossman, and E.~Helpman.
\newblock Common agency and coordination: general theory and application to
  government policy making.
\newblock {\em J. Political Econ.}, 105:752--769, 1997.

\bibitem{EKPQ97}
N.~{El Karoui}, S.~Peng, and M.~C. Quenez.
\newblock {Backward stochastic differential equations in finance}.
\newblock {\em Math. Finance}, 7(1):1--71, 1997.

\bibitem{EHMP2021}
R.~Elie, E.~Hubert, T.~Mastrolia, and D.~Possama\"{\i}.
\newblock Mean-field moral hazard for optimal energy demand response
  management.
\newblock {\em Math. Finance}, 31(1):399--473, 2021.

\bibitem{EK10}
R.~Elie and I.~Kharroubi.
\newblock {Probabilistic representation and approximation for coupled systems
  of variational inequalities}.
\newblock {\em Statist. Probab. Lett.}, 80(17-18):1388--1396, 2010.

\bibitem{EMP18}
R.~Elie, T.~Mastrolia, and D.~Possama{\"{\i}}.
\newblock {A tale of a principal and many, many agents}.
\newblock {\em Mathematics of Operations Research}, 44(2):440--467, 2019.

\bibitem{EP19}
R.~Elie and D.~Possama{\"{\i}}.
\newblock {Contracting theory with competitive interacting agents}.
\newblock {\em SIAM J. Control Optim.}, 57(2):1157--1188, 2019.

\bibitem{EMRT2021}
O.~E. Euch, T.~Mastrolia, M.~Rosenbaum, and N.~Touzi.
\newblock Optimal make-take fees for market making regulation.
\newblock {\em Math. Finance}, 31(1):109--148, 2021.

\bibitem{FM20}
M.~Fuhrman and M.-A. Morlais.
\newblock Optimal switching problems with an infinite set of modes: an approach
  by randomization and constrained backward {SDE}s.
\newblock {\em Stochastic Process. Appl.}, 130(5):3120--3153, 2020.

\bibitem{HL95}
S.~Hamad\`ene and J.~P. Lepeltier.
\newblock {Backward equations, stochastic control and zero-sum stochastic
  differential games}.
\newblock {\em Stochastics and Stochastic Reports}, 54(3-4):221--231, 1995.

\bibitem{HZ10}
S.~Hamad{\`e}ne and J.~Zhang.
\newblock {Switching problem and related system of reflected backward SDEs}.
\newblock {\em Stochastic Process. Appl.}, 120(4):403--426, 2010.

\bibitem{HL90}
U.~G. Haussmann and J.-P. Lepeltier.
\newblock {On the existence of optimal controls}.
\newblock {\em SIAM J. Control Optim.}, 28(4):851--902, 1990.

\bibitem{Hernandez2021}
C.~Hern\'andez.
\newblock {\em {Me, myself and I: time-inconsistent stochastic control,
  contract theory and backward stochastic Volterra integral equations}}.
\newblock PhD thesis, Columia University, New York, USA, 2021.

\bibitem{HP2021}
C.~Hern\'{a}ndez and D.~Possama\"{i}.
\newblock {Me, myself and I: a general theory of non-Markovian
  time-inconsistent stochastic control for sophisticated agents}.
\newblock {\em To appear in Ann. Appl. Probab.}, 2022.

\bibitem{HM87}
B.~Holmstr{\"o}m and P.~Milgrom.
\newblock {Aggregation and linearity in the provision of intertemporal
  incentives}.
\newblock {\em Econometrica}, 55(2):303--328, 1987.

\bibitem{HT10}
Y.~Hu and S.~Tang.
\newblock {Multi-dimensional BSDE with oblique reflection and optimal
  switching}.
\newblock {\em Probability Theory and Related Fields}, 147(1-2):89--121, 2010.

\bibitem{HMPW2020}
E.~Hubert, T.~Mastrolia, D.~Possama\"{i}, and X.~Warin.
\newblock {Incentives, lockdown, and testing: from Thucydides's analysis to the
  COVID-19 pandemic}.
\newblock {\em To appear in J. Math. Biol.}, 84(5), 2022.

\bibitem{Jacod75}
J.~Jacod.
\newblock {Multivariate Point Processes: Predictable Projection, Radon-Nikodym
  Derivatives, Representation of Martingales}.
\newblock {\em Z. Wahrscheinlichkeitstheorie verw}, 31(3):235--253, 1975.

\bibitem{KSS2008}
H.~K. Koo, G.~Shim, and J.~Sung.
\newblock Optimal multi-agent performance measures for team contracts.
\newblock {\em Math. Finance}, 18(4):649--667, 2008.

\bibitem{Lacker15}
D.~Lacker.
\newblock {Mean field games via controlled martingale problems: existence of
  Markovian equilibria}.
\newblock {\em Stochastic Processes and their Applications}, 125(7):2856--2894,
  2015.

\bibitem{LL07}
J.-M. Lasry and P.-L. Lions.
\newblock {Mean field games}.
\newblock {\em Jpn. J. Math.}, 2(1):229--260, 2007.

\bibitem{LaT2022}
M.~Lauri{\`e}re and L.~Tangpi.
\newblock Backward propagation of chaos.
\newblock {\em Electronic Journal of Probability}, 27:1--30, 2022.

\bibitem{LRTY2022}
Y.~Lin, Z.~Ren, N.~Touzi, and J.~Yang.
\newblock {Random Horizon Principal-Agent Problems}.
\newblock {\em SIAM Journal on Control and Optimization}, 60(1):355--384, 2022.

\bibitem{MR18}
T.~Mastrolia and Z.~Ren.
\newblock {Principal-Agent problem with common agency without communication}.
\newblock {\em SIAM Journal of Financial Mathematics}, 9(2):775--799, 2018.

\bibitem{PP90}
E.~Pardoux and S.~Peng.
\newblock {Adapted solution of a backward stochastic differential equation}.
\newblock {\em Systems \& Control Letters}, 14(1):55--61, 1990.

\bibitem{PVZ09}
H.~Pham, L.~V. Vathana, and X.~Y. Zhou.
\newblock {Optimal Switching over Multiple Regimes}.
\newblock {\em SIAM J. Control Optim.}, 48(4):2217--2253, 2009.

\bibitem{PT2020}
D.~Possama\"{i} and N.~Touzi.
\newblock {Is there a Golden Parachute in Sannikov's principal-agent problem?}
\newblock Preprint, 2020.

\bibitem{Sannikov08}
Y.~Sannikov.
\newblock {A continuous-time version of the principal-agent problem}.
\newblock {\em Rev. Econom. Stud.}, 75(3):957--984, 2008.

\bibitem{Zhang2017BSDEBook}
J.~Zhang.
\newblock {\em {Backward Stochastic Differential Equations, From Linear to
  Fully Nonlinear Theory}}.
\newblock Springer, 2017.

\end{thebibliography}
 
\bibliographystyle{abbrv}  

\end{document}